\setlist{labelindent=1pt,itemsep=.5em}
\setlist[itemize]{leftmargin=1.2cm}
\setlist[enumerate]{itemindent=0em,leftmargin=1.2cm}
\newcommand{\subjclass}[2][2020]{%
  \let\@oldtitle\@title%
  \gdef\@title{\@oldtitle\footnotetext{#1 \emph{Mathematics subject classification}: #2}}%
}
\newcommand{\keywords}[1]{%
  \let\@@oldtitle\@title%
  \gdef\@title{\@@oldtitle\footnotetext{\emph{Keywords}: #1}}%
}
\newtheorem{defn}{Definition}[section]
\newtheorem{thm}[defn]{Theorem}
\newtheorem{lem}[defn]{Lemma}
\newtheorem{prop}[defn]{Proposition}
\newtheorem{cor}[defn]{Corollary}
\newtheorem{eg}[defn]{Example}
\newtheorem{re}[defn]{Remark}
\newcommand{\bdefn}{\begin{defn}}
\newcommand{\edefn}{\end{defn}}
\newcommand{\bthm}{\begin{thm}}
\newcommand{\ethm}{\end{thm}}
\newcommand{\blem}{\begin{lem}}
\newcommand{\elem}{\end{lem}}
\newcommand{\bprop}{\begin{prop}}
\newcommand{\eprop}{\end{prop}}
\newcommand{\bcor}{\begin{cor}}
\newcommand{\ecor}{\end{cor}}
\newcommand{\beg}{\begin{eg}}
\newcommand{\eeg}{\end{eg}}
\newcommand{\bre}{\begin{re}}
\newcommand{\ere}{\end{re}}
\newcommand{\bpf}{\begin{proof}}
\newcommand{\epf}{\end{proof}}
\newcommand{\benu}{\begin{enumerate}}
\newcommand{\eenu}{\end{enumerate}}
\newcommand{\bc}{\begin{center}}
\newcommand{\ec}{\end{center}}
\newcommand{\bea}{\begin{eqnarray}}
\newcommand{\eea}{\end{eqnarray}}
\newcommand{\ba}{\begin{align*}}
\newcommand{\ea}{\end{align*}}
\newcommand{\Bea}{\begin{eqnarray*}}
\newcommand{\Eea}{\end{eqnarray*}}
\newcommand{\beq}{\begin{equation}}
\newcommand{\eeq}{\end{equation}}
\newcommand{\Beq}{\begin{equation*}}
\newcommand{\Eeq}{\end{equation*}}
\newcommand{\bspl}{\begin{split}}
\newcommand{\espl}{\end{split}}
\numberwithin{equation}{section}
\begin{document}
\date{}
\title{\bf Pseudo-Euclidean Hom-alternative   superalgebras and Hom-post-alternative superalgebras}
\author{ S. Mabrouk,  O. Ncib, S. Sendi and S. Silvestrov}

\author{
 S. Mabrouk $^{2}$\footnote{E-mail: mabrouksami00@yahoo.fr }\ ,
O. Ncib $^{2}$\footnote{E-mail: othmenncib@yahoo.fr}\ , S. Sendi $^{1}$\footnote{E-mail: sihemsendi995@gmail.com} and S. Silvestrov $^{3}$\footnote{E-mail: sergei.silvestrov@mdu.se (Corresponding author)} 
}

\date{
$^{1}${\small University of Sfax, Faculty of Sciences Sfax,  BP 1171, 3038 Sfax, Tunisia} \\
$^{2}${\small  University of Gafsa, Faculty of Sciences Gafsa, 2112 Gafsa, Tunisia}\\{\small $^3$ M\"{a}lardalen University,
Division of Mathematics and Physics,}\\
{\small \hspace{1.5 cm}
School of Education, Culture and Communication, }\\ 
{\small \hspace{1.5 cm}
Box 883, 72123 V\"{a}steras, Sweden.}\\
}
\maketitle
\begin{abstract}
The purpose of this paper is to study pseudo-Euclidean and symplectic Hom-alternative superalgebras and  discuss some of their proprieties and provide construction procedures. We also introduce the notion of Rota-Baxter operators of pseudo-Euclidean Hom-alternative superalgebras of any weight and Hom-post-alternative superalgebras. A Hom-post-alternative superalgebra consists of three operations such that some compatibility conditions are satisfied. We show that a weighted Rota-Baxter operator  induces a Hom-post-alternative superalgebra naturally. Conversely, a Hom-post-alternative superalgebra gives rise to a new Hom-alternative superalgebra. In particular, a Hom-pre-alternative superalgebra is naturally built via symplectic structures.
\end{abstract}

{\bf Keywords:} Hom-alternative superalgebra, pseudo-Euclidean, symplectic, Rota-Baxter operator, Hom-post-alternative superalgebra.

\textbf{Mathematics Subject Classification (2020):} 17D15, 17D30, 17B61, 17B38.
\tableofcontents
\section{Introduction}
Hom-algebras first appeared in 2003 in the work of Hartwig, Larsson and Silvestrov \cite{hls}, in their investigation of the extension to general twisted derivations of $q$-deformations of the Witt and Virasoro algebras. Motivated by the twisted Jacobi identities in these algebras discovered in their work, they introduced Hom-Lie algebras and more general quasi-hom-Lie algebras with Jacobi identity twisted by linear maps. In 2005, Larsson and Silvestrov introduced quasi-Lie and quasi-Leibniz algebras  \cite{LarssonSilvestrov2005:QuasiLiealgebras} and color quasi-Lie and color quasi-Leibniz algebras  \cite{LarssonSilvestrov2005:GradedquasiLiealgebras}
incorporating within the same framework Hom-Lie algebras and quasi-Hom-Lie algebras, the color Hom-Lie algebras and Hom-Lie superalgebras, the color quasi-Hom-Lie algebras and quasi-Hom-Lie superalgebras, as well as the quasi-Leibniz algebras, the color quasi-Leibniz algebras and super quasi-Leibniz algebras.
Hom-Lie admissible algebras have been considered first in 2006 in the work of Makhlouf and Silvestrov \cite{MakhoufSilvestrov:Prep2006JGLTA2008:homstructure}, where
the Hom-associative algebras and more general $G$-Hom-associative algebras including the Hom-Vinberg algebras (Hom-left symmetric algebras), Hom-pre-Lie algebras (Hom-right symmetric algebras), and some other new Hom-algebra structures have been introduced and shown to be Hom-Lie admissible, in the sense that the operation of commutator as new product in these Hom-algebras structures yields Hom-Lie algebras. Furthermore, in \cite{MakhoufSilvestrov:Prep2006JGLTA2008:homstructure}, flexible Hom-algebras have been introduced and connections to Hom-algebra generalizations of derivations and of adjoint derivations maps have been considered, investigations of the classification problems for Hom-Lie algebras have been initiated with constriction of families of the low-dimensional Hom-Lie algebras. Various Hom-algebra structures have been considered (see for example \cite{cheng-su,yau2}). The Hom-algebras are widely explored in the last 15 years. For example, in \cite{amm-ej-makh,makh-sil2} the authors study cohomology and deformations of Hom-associative and Hom-Lie algebras. In particular, they generalize the classical Gerstenhaber bracket and Nijenhuis-Richardson bracket on the cochain complex of Hom-associative and Hom-Lie algebras. See also \cite{Chtioui1,Chtioui2,sheng-alg,yau2,yau3} and references therein for more on Hom-algebras.

An algebra that satisfies
\[
(xy)y = x(yy)
\]
is called a right alternative algebra.  If a right alternative algebra also satisfies the left alternative identity
\[
(xx)y = x(xy),
\]
then it is called an alternative algebra.  For example, the $8$-dimensional Cayley algebras are alternative algebras that are not associative  \cite{RD}.  Alternative algebras are closely related to other classes of non-associative algebras.  In fact, alternative algebras are Jordan-admissible and Maltsev-admissible \cite{bk,maltsev}.  Alternative algebras also satisfy the Moufang identities \cite{RD}. The Hom version of alternative algebra is studied in \cite{AM,yau3}, see also \cite{HamdediMakhlouf1,HamdediMakhlouf2}.

Alternative superalgebras \cite{EI} are the superalgebras whose associator is a super-alternating function. In particular, all associative superalgebras are alternative. Jordan superalgebras appeared in 1977–1980 \cite{VG}. In \cite{KFA} the authors introduce Hom-alternative, Hom-Malcev and Hom-Jordan superalgebras which are generalizations of alternative, Malcev and Jordan superalgebras respectively.

The purpose of this paper is to study Hom-type generalizations of alternative superalgebras, which are endowed with an even nondegenerate supersymmetric invariant bilinear form, and are called pseudo-Euclidean.

In 1960, G. Baxter \cite{Baxter} first introduced the notion of Rota–Baxter operators for associative algebras. The Rota-Baxter operators have several applications in probability \cite{Baxter}, combinatorics \cite{Cartier, Guo, Rota}, and quantum field theory \cite{Connes}. In the 1980s, the notion of Rota-Baxter operator of weight $0$ was introduced in terms of the classical Yang-Baxter equation for Lie algebras (see \cite{Survey-Guo} for more details).

Up to now, most of study on Rota-Baxter operators was on the associative algebras.
It was mentioned in \cite{Ebrahimi-Guo-Kreimer} that the Rota-Baxter operator and relation can be extended to Lie algebras or pre-Lie algebras. In the case of Lie algebras, when the
weight $\lambda=0$, the Rota-Baxter relation is just the operator form of the classical Yang-Baxter equation, and when the weight $\lambda = 1$, it corresponds to the operator form of the modified
classical Yang-Baxter equation \cite{Ebrahimi-Guo-Kreimer}. 

This paper is organized  as follows. In Section \ref{Prel}, we review the basic definitions and properties related to Hom-alternative superalgebras and give some construction. Section \ref{PEHomAlt} is devoted to the concept of
pseudo-Euclidean Hom-alternative superalgebras and  symplectic Hom-alternative superalgebra and the construction of symplectic  Hom-alternative superalgebras from pseudo-Euclidean symplectic Hom-Alternative superalgebras and an antisymmetric superderivation.
In  Section \ref{RotaBaxter}, we study the notion of weighted Rota-Baxter operators on Hom-alternative superalgebras and construct a Hom-post-alternative algebra consisting of three operations such that some compatibility conditions are satisfied for a given weighted Rota-Baxter operator of a Hom-alternative superalgebra. Conversely, a Hom-post-alternative superalgebra gives rise to a new Hom-alternative superalgebra. In particular, we show that a  Hom-pre-alternative superalgebra is naturally built via symplectic structures.

\textbf{Conventions and notations:}
\begin{enumerate}
    \item 
Throughout the paper, $\mathbb{K}$ denotes an algebraically closed field of characteristic $0$, and all vector spaces are over $\mathbb{K}$ and finite-dimensional.
\item We refer to the standard one-to-one correspondence between linear maps
$F : V_1\otimes\dots\otimes V_n \rightarrow W$ and multilinear maps
$F : V_1\times\dots\times V_n \rightarrow W$ given by
$F(v_1,\dots,v_n)=F(v_1\otimes\dots\otimes v_n)$, whenever the same notation is used for these maps.
\item A vector space $V$ is said to be a $\mathbb{Z}_2$-graded if we are given a family $(V_i)_{i\in\mathbb{Z}_2}$ of vector subspace of $V$ such that $V=V_0\oplus V_1.$
\item The symbol $|x|$ always implies that $x$ is a
$\mathbb{Z}_2$-homogeneous element and $|x|$ is the $\mathbb{Z}_2$-degree. We denote by $\mathcal{H(A)}$ the set of all homogeneous elements of $\mathcal{A}$ and $\mathcal{H}(\mathcal{A}^n)$ refers to the set of tuples with homogeneous elements. 
\item Let $End(\mathcal{A} )$ be the $\mathbb{Z}_{2}$-graded vector space of endomorphisms of a $\mathbb{Z}_{2}$-graded vector space $\mathcal{A}  = \mathcal{A} _{0} \oplus \mathcal{A} _{1}.$ The graded binary commutator $[f,g] = f \circ g - g \circ f$ induces the structure of Lie superalgebra in $End(\mathcal{A} )$.
\end{enumerate}
 \section{Basics on Hom-alternative superalgebras}\label{Prel}
In this section, we list some definitions and constructions on Hom-alternative superalgebras introduced in \cite{KFA}.
Recall that, a Hom-superalgebra is a triple $(\mathcal{A}, \mu, \alpha)$ in which $\mathcal{A}$ is a vector superspace,
 $\mu : \mathcal{A}\times \mathcal{A}\rightarrow \mathcal{A}$, is an even bilinear map and $\alpha : \mathcal{A}\rightarrow \mathcal{A}$ is an even linear map.
 \begin{enumerate}
     \item 
 The Hom-associator of $\mathcal{A}$ is the trilinear map $as_\mathcal{A}: \mathcal{A}\times \mathcal{A}\times \mathcal{A}\rightarrow \mathcal{A}$ defined
as $$as_\mathcal{A}=\mu\circ(\mu\otimes\alpha-\alpha\otimes\mu).$$
In terms of elements, the map $as_\mathcal{A}$ is given by
$$as_\mathcal{A}(x, y, z)=\mu(\mu(x, y), \alpha(z))-\mu(\alpha(x), \mu(y, z)),\;\forall x,y,z\in\mathcal{H}(\mathcal{A}).$$
\item An even linear map $f : (\mathcal{A}, \mu, \alpha) \rightarrow (\mathcal{A}', \mu', \alpha')$ is said to be a weak morphism of Hom-superalgebras if
$$f\circ\mu=\mu\circ(f\otimes f).$$
If in addition $f\circ\alpha=\alpha'\circ f$, $f$ is called a morphism of Hom-superalgebras.
\item A Hom-superalgebra $(\mathcal{A}, \mu, \alpha)$ is said to be  multiplicative if $\alpha\circ\mu=\mu\circ\alpha^{\otimes 2}$ .
\item A Hom-superalgebra $(\mathcal{A}, \mu, \alpha)$ is said to be regular (involutive) if $\alpha$ is bijective   ($\alpha^{2}=id$). It is easy to show that any involutive  Hom-superalgebra is regular.
 \end{enumerate}
 Throughout this work, all Hom-superalgebras are considered multiplicative.  
\begin{defn}
A Hom-associative superalgebra is a triple $( \mathcal{A}, \mu,
\alpha)$ consisting of $\mathbb{Z}_2$-graded vector space  $\mathcal{A}$, an even bilinear map $\mu : \mathcal{A}\times \mathcal{A} \rightarrow \mathcal{A}$ 
and an even linear map  $\alpha:\mathcal{A} \rightarrow \mathcal{A}$ satisfying for all $x, y, z\in \mathcal{H}(\mathcal{A})$
\begin{equation}
as_\mathcal{A}(x, y, z)=0.\label{as}
\end{equation}
\end{defn}
\begin{defn}[\hspace{-0,1mm}\cite{KFA}] \label{alt}
 A triple  $(\mathcal{A},\cdot,\alpha)$ is called a left Hom-alternative superalgebra if for all $x,y,z\in\mathcal{H}(\mathcal{A})$,  
\begin{equation}\label{LeftHomAlt}
as_\mathcal{A}(x, y, z)+(-1)^{|x||y|}as_\mathcal{A}(y, x, z)=0,\end{equation}
and respectively, right Hom-alternative superalgebra,  
if for all $x,y,z\in\mathcal{H}(\mathcal{A})$,
\begin{equation}\label{RightHomAlt}as_\mathcal{A}(x, y, z)+(-1)^{|y||z|}as_\mathcal{A}(x, z, y)=0.\end{equation}
\end{defn}
A Hom-alternative superalgebra is one which is both left and right Hom-alternative superalgebra. Of course, for $\alpha=Id$, one gets the definition of alternative superalgebras ( see  \cite{EI}) given by, for all $x,y,z\in \mathcal{H}(\mathcal{A})$, \begin{equation}\label{AltSupConditions}
as(x, y, z)+(-1)^{|x||y|}as(y, x, z)=0\quad\text{and}\quad as(x, y, z)+(-1)^{|y||z|}as(x, z, y)=0.    
\end{equation}Where 
$$as(x, y, z)=\mu(\mu(x, y), z)-\mu(x, \mu(y, z)).$$
\begin{re}
Any Hom-associative superalgebra is Hom-alternative, but a
 Hom-alternative superalgebra are not far removed from Hom-associative superalgebra.
\end{re}
\begin{defn}
A Hom-flexible superalgebra  is a triple 
$( \mathcal{A}, \cdot, \alpha)$ consisting of $\mathbb{Z}_2$-graded vector space  $\mathcal{A}$, an even bilinear map $\cdot : \mathcal{A}\times \mathcal{A} \rightarrow \mathcal{A}$ 
and an even linear map  $\alpha:\mathcal{A} \rightarrow \mathcal{A}$ satisfying for all $x, y, z\in \mathcal{H}(\mathcal{A})$,
\begin{equation}
as_\mathcal{A}(x, y, z)+(-1)^{|x||y|+|x||z|+|y||z|}as_\mathcal{A}(z, y,x)=0.\label{HomFlixible}
\end{equation}
\end{defn}
\begin{lem}
    \begin{enumerate}[label=\upshape{\arabic*.},left=5pt]
        \item Any Hom-alternative superalgebra is Hom-flexible superalgebra.
        \item If $(\mathcal{A},\cdot,\alpha)$ be a Hom-alternative superalgebra. Then we have
        \begin{equation}
as_\mathcal{A}(x, y, z)=(-1)^{|x|(|y|+|z|)}as_\mathcal{A}(y,z,x).\label{Alter2}
\end{equation}
    \end{enumerate}
\end{lem}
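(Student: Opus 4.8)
The plan is to obtain both assertions by chaining the left and right Hom-alternative identities \eqref{LeftHomAlt} and \eqref{RightHomAlt}, carefully tracking the Koszul signs; it suffices to argue for homogeneous $x,y,z\in\mathcal{H}(\mathcal{A})$, the general case following by bilinearity of $as_\mathcal{A}$. I would in fact prove part~2 first, since part~1 drops out of the same computation carried one step further.

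For \eqref{Alter2}, I would start from $as_\mathcal{A}(x,y,z)$ and apply the left identity \eqref{LeftHomAlt} to rewrite it as $-(-1)^{|x||y|}as_\mathcal{A}(y,x,z)$; then I would apply the right identity \eqref{RightHomAlt} to $as_\mathcal{A}(y,x,z)$, the transposed adjacent pair now being $x$ and $z$, to get $as_\mathcal{A}(y,x,z)=-(-1)^{|x||z|}as_\mathcal{A}(y,z,x)$. Composing the two relations yields
\[
as_\mathcal{A}(x,y,z)=(-1)^{|x||y|+|x||z|}as_\mathcal{A}(y,z,x)=(-1)^{|x|(|y|+|z|)}as_\mathcal{A}(y,z,x),
\]
which is \eqref{Alter2}.

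For Hom-flexibility (part~1), I would continue one more step: applying the left identity \eqref{LeftHomAlt} to $as_\mathcal{A}(y,z,x)$ gives $as_\mathcal{A}(y,z,x)=-(-1)^{|y||z|}as_\mathcal{A}(z,y,x)$, and substituting this into the displayed expression produces
\[
as_\mathcal{A}(x,y,z)=-(-1)^{|x||y|+|x||z|+|y||z|}as_\mathcal{A}(z,y,x),
\]
i.e.\ exactly \eqref{HomFlixible}, so $(\mathcal{A},\cdot,\alpha)$ is Hom-flexible. (One could equally well iterate \eqref{Alter2} and compare.)

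There is no genuine obstacle here beyond sign bookkeeping; the only point that needs a little care is matching, in each use of \eqref{LeftHomAlt} or \eqref{RightHomAlt}, the correct pair of adjacent arguments to the transposition and picking up the associated factor $(-1)^{|a||b|}$.
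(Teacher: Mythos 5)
Your proof is correct: the sign bookkeeping in both steps checks out, and chaining \eqref{LeftHomAlt} with \eqref{RightHomAlt} to get \eqref{Alter2}, then one more application of \eqref{LeftHomAlt} to get \eqref{HomFlixible}, is exactly the intended argument. The paper states this lemma without proof, but the identical manipulations (transposing adjacent arguments of $as_{\mathcal{A}}$ at the cost of a factor $-(-1)^{|a||b|}$) appear explicitly inside its proof of Proposition \ref{OpAlt}, so your route coincides with the paper's implicit one.
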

\begin{prop}\label{OpAlt}
Let  $(\mathcal{A},\cdot,\alpha)$ be a Hom-alternative superalgebra. Then $\mathcal{A}^{op}:=(\mathcal{A} ,\cdot^{op},\alpha)$, where 
$\cdot^{op} : \mathcal{A} \times \mathcal{A}  \rightarrow \mathcal{A} $ an even bilinear map such that for all $x,y\in \mathcal{H}(\mathcal{A} )$, $$x\cdot^{op} y=-(-1)^{|x||y|}y\cdot x,$$
is a Hom-alternative superalgebra.
\end{prop}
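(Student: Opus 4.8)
The plan is to compute the Hom-associator $as^{op}$ of $\mathcal{A}^{op}$ directly in terms of $as_\mathcal{A}$, and then to read off the left and right Hom-alternative identities for $\mathcal{A}^{op}$ from those of $\mathcal{A}$, with the roles of the two identities interchanged.

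First I would record the routine structural facts. The map $\cdot^{op}$ is even, so $|x\cdot^{op} y|=|x|+|y|$ for homogeneous $x,y$, and $\mathcal{A}^{op}$ is again multiplicative, since
\[
\alpha(x\cdot^{op} y)=-(-1)^{|x||y|}\alpha(y\cdot x)=-(-1)^{|x||y|}\alpha(y)\cdot\alpha(x)=\alpha(x)\cdot^{op}\alpha(y).
\]
The core step is to expand $as^{op}(x,y,z)=(x\cdot^{op} y)\cdot^{op}\alpha(z)-\alpha(x)\cdot^{op}(y\cdot^{op} z)$ by applying the definition of $\cdot^{op}$ twice in each term. Each term produces a Koszul sign, and the only point requiring care is that both signs collapse to the fully symmetric exponent $|x||y|+|x||z|+|y||z|$; concretely one gets $(|x|+|y|)|z|+|x||y|=|x|(|y|+|z|)+|y||z|=|x||y|+|x||z|+|y||z|$. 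The outcome of this computation is the clean identity
\[
as^{op}(x,y,z)=-(-1)^{|x||y|+|x||z|+|y||z|}\,as_\mathcal{A}(z,y,x).
\]

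From this formula the statement follows in two lines. For the left Hom-alternative identity of $\mathcal{A}^{op}$, substitution gives
\[
as^{op}(x,y,z)+(-1)^{|x||y|}as^{op}(y,x,z)=-(-1)^{|x||y|+|x||z|+|y||z|}\bigl(as_\mathcal{A}(z,y,x)+(-1)^{|x||y|}as_\mathcal{A}(z,x,y)\bigr),
\]
and the bracketed expression vanishes by the right Hom-alternative identity \eqref{RightHomAlt} for $\mathcal{A}$ applied to the triple $(z,y,x)$. Symmetrically,
\[
as^{op}(x,y,z)+(-1)^{|y||z|}as^{op}(x,z,y)=-(-1)^{|x||y|+|x||z|+|y||z|}\bigl(as_\mathcal{A}(z,y,x)+(-1)^{|y||z|}as_\mathcal{A}(y,z,x)\bigr),
\]
and the bracket vanishes by the left Hom-alternative identity \eqref{LeftHomAlt} for $\mathcal{A}$ applied to $(z,y,x)$. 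Hence $\mathcal{A}^{op}$ is both left and right Hom-alternative, i.e.\ a Hom-alternative superalgebra. The only real obstacle is the sign bookkeeping in the central computation of $as^{op}$; once the symmetric exponent is extracted, the rest is a direct substitution using \eqref{LeftHomAlt} and \eqref{RightHomAlt}.
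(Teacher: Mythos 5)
Your proposal is correct and follows essentially the same route as the paper: both hinge on the identity $as_{\mathcal{A}^{op}}(x,y,z)=-(-1)^{|x||y|+|x||z|+|y||z|}as_{\mathcal{A}}(z,y,x)$ and then deduce the left (resp.\ right) Hom-alternative identity for $\mathcal{A}^{op}$ from the right (resp.\ left) one for $\mathcal{A}$ applied to the reversed triple. Your sign bookkeeping checks out, so no issues.
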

\begin{proof}
The associator in the opposite Hom-superalgebra is given by
\begin{eqnarray*}
 as_{\mathcal{A}^{op}}(x,y,z)
&=&(x\cdot^{op} y)\cdot^{op}\alpha(z)-\alpha(x)\cdot^{op}(y\cdot^{op}z)\\
&=&-(-1)^{|x||y|}(y\cdot x)\cdot^{op}\alpha(z)+(-1)^{|y||z|}\alpha(x)\cdot^{op}(z\cdot y))\\
&=&(-1)^{|x||y|+|z|(|x|+|y|)}(\alpha(z)\cdot(y\cdot x))-(-1)^{|y||z|+|x|(|z|+|y|)}((z\cdot y)\cdot \alpha(x))\\
&=&-(-1)^{|y||z|+|x||z|+|y||x|}((z\cdot y)\cdot\alpha(x)-\alpha(z)\cdot(y\cdot x))\\
&=&-(-1)^{|y||z|+|x||z|+|y||x|}as_{\mathcal{A}}(z,y,x).
\end{eqnarray*}
By \eqref{LeftHomAlt} and \eqref{RightHomAlt}, for all $x,y,z \in \mathcal{H}(\mathcal{A})$, 
\begin{align*}
 as_{\mathcal{A}^{op}}(x,y,z) 
&=-(-1)^{|y||z|+|x||z|+|y||x|}as_{\mathcal{A}}(z,y,x)\\ 
&=-(-1)^{|x||y|+|x||z|}as_{\mathcal{A}}(y,z,x)
=-(-1)^{|y||z|}as_{\mathcal{A}^{op}}(x,z,y)
\\
as_{\mathcal{A}^{op}}(x,y,z)&=-(-1)^{|y||z|+|x||z|+|y||x|}as_{\mathcal{A}}(z,y,x)\\ 
& =-(-1)^{|y||z|+|x||z|}as_{\mathcal{A}}(z,x,y))
=-(-1)^{|y||x|}as_{\mathcal{A}^{op}}(y,x,z).
\end{align*}
Therefore  $\mathcal{A}^{op}$ is a Hom-alternative superalgebra.
\end{proof}
\begin{re}
If $\mathcal{A}$ is only left (right) Hom-alternative superalgebra, $\mathcal{A}^{op}$ is only right (left) Hom-alternative superalgebra.
The importance of the opposite Hom-superalgebra is that it leads to a notion of duality for Hom-alternative superalgebra.
\end{re}
\begin{thm}[\hspace{-0.1mm}\cite{KFA}]
\label{TwistYauHomAltAlg}
Let $(\mathcal{A},\cdot,\alpha)$ be a Hom-alternative superalgebra and  $\beta:\mathcal{A}\to\mathcal{A}$ be morphism of  Hom-alternative superalgebra. Then $(\mathcal{A},\cdot_\beta=\beta \cdot,\beta\alpha)$
is a Hom-alternative superalgebra.
\end{thm}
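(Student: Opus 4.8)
The plan is to use that $\beta$ being a morphism of Hom-alternative superalgebras means precisely that $\beta$ is a weak morphism, $\beta\circ\cdot=\cdot\circ(\beta\otimes\beta)$, together with $\beta\circ\alpha=\alpha\circ\beta$. Write $\mathcal{A}_\beta=(\mathcal{A},\cdot_\beta,\beta\alpha)$ with $\cdot_\beta=\beta\circ\cdot$. First I would check that $\mathcal{A}_\beta$ is a multiplicative Hom-superalgebra: $\cdot_\beta$ and $\beta\alpha$ are even because $\beta$, $\cdot$ and $\alpha$ are; and the multiplicativity condition $(\beta\alpha)\circ\cdot_\beta=\cdot_\beta\circ(\beta\alpha)^{\otimes 2}$, after expanding $\cdot_\beta$ and using $\beta\circ\cdot=\cdot\circ(\beta\otimes\beta)$, the multiplicativity $\alpha\circ\cdot=\cdot\circ\alpha^{\otimes 2}$ of $\mathcal{A}$, and the commutation $\beta\alpha=\alpha\beta$, reduces to the identity $\beta\alpha\beta(x\cdot y)=\beta^{2}\alpha(x\cdot y)$, which is exactly $\beta\alpha=\alpha\beta$ applied to $x\cdot y$.

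The heart of the argument is the associator formula
\[
as_{\mathcal{A}_\beta}(x,y,z)=\beta^{2}\big(as_{\mathcal{A}}(x,y,z)\big),\qquad x,y,z\in\mathcal{H}(\mathcal{A}).
\]
To obtain it I would expand the two terms directly: using $\beta\circ\cdot=\cdot\circ(\beta\otimes\beta)$ twice,
$(x\cdot_\beta y)\cdot_\beta(\beta\alpha)(z)=\beta\big(\beta(x\cdot y)\cdot\beta(\alpha(z))\big)=\beta^{2}\big((x\cdot y)\cdot\alpha(z)\big)$, and likewise
$(\beta\alpha)(x)\cdot_\beta(y\cdot_\beta z)=\beta^{2}\big(\alpha(x)\cdot(y\cdot z)\big)$; subtracting and using linearity of $\beta^{2}$ gives the claim. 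Note that this step uses only the weak-morphism property of $\beta$, not the commutation with $\alpha$.

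Finally, since $\beta^{2}$ is an even linear map, applying it to the left Hom-alternative identity \eqref{LeftHomAlt} and to the right Hom-alternative identity \eqref{RightHomAlt} of $(\mathcal{A},\cdot,\alpha)$ yields at once
$as_{\mathcal{A}_\beta}(x,y,z)+(-1)^{|x||y|}as_{\mathcal{A}_\beta}(y,x,z)=\beta^{2}(0)=0$ and
$as_{\mathcal{A}_\beta}(x,y,z)+(-1)^{|y||z|}as_{\mathcal{A}_\beta}(x,z,y)=\beta^{2}(0)=0$, so $\mathcal{A}_\beta$ is a Hom-alternative superalgebra. There is no genuine obstacle in this proof; the only point deserving a moment of care is that the Koszul signs are unchanged when $\beta^{2}$ is pulled through the two sums, which is valid precisely because $\beta$, hence $\beta^{2}$, is even and therefore preserves $\mathbb{Z}_2$-degrees.
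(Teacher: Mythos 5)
Your proof is correct, and it is the standard Yau-twist argument: the identity $as_{\mathcal{A}_\beta}=\beta^{2}\circ as_{\mathcal{A}}$, obtained from the weak-morphism property, immediately transports both super-alternativity identities, with no sign issues since $\beta$ is even. The paper itself states this theorem without proof (citing the reference for it), so there is nothing to compare against beyond noting that your route is the expected one; your extra check of multiplicativity of $(\mathcal{A},\cdot_\beta,\beta\alpha)$ using $\beta\alpha=\alpha\beta$ is a welcome detail that such proofs often leave implicit.
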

\begin{defn}
A Hom-alternative superalgebra $(\mathcal{A},\cdot,\alpha)$ is said to be a Hom-alternative superalgebra of  alternative-type, if there exists another multiplication $\cdot^{'}$ such that $(\mathcal{A},\cdot^{'})$ is an alternative superalgebra where $\alpha(x\cdot^{'}y)=x\cdot y$  for $x,y\in\mathcal{H}(\mathcal{A})$.
\end{defn}
\begin{prop}\label{HomAltToAlt}
Any  regular Hom-alternative superalgebra $(\mathcal{A},\cdot,\alpha)$ is a Hom-alternative superalgebra of  alternative-type, where   $x\cdot^{'} y=\alpha^{-1}(x)\cdot\alpha^{-1}(y)$.
\end{prop}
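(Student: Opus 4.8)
The plan is to verify directly the two requirements in the definition of ``alternative-type'': the twisting identity $\alpha(x\cdot' y)=x\cdot y$ and the superalternativity of the untwisted product $\cdot'$. For the twisting identity, since every Hom-superalgebra here is multiplicative, $\alpha\circ\mu=\mu\circ\alpha^{\otimes2}$, so for homogeneous $x,y$,
\[
\alpha(x\cdot' y)=\alpha\big(\alpha^{-1}(x)\cdot\alpha^{-1}(y)\big)=\alpha\alpha^{-1}(x)\cdot\alpha\alpha^{-1}(y)=x\cdot y .
\]

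The key preliminary observation is that $\alpha^{-1}$, and hence every power $\alpha^{-k}$, is again multiplicative: composing $\alpha\circ\mu=\mu\circ\alpha^{\otimes2}$ on the left with $\alpha^{-1}$ and on the right with $\alpha^{-1}\otimes\alpha^{-1}$ yields $\alpha^{-1}\circ\mu=\mu\circ(\alpha^{-1}\otimes\alpha^{-1})$. Using this I would expand the ordinary (untwisted) associator $as'(x,y,z)=(x\cdot'y)\cdot'z-x\cdot'(y\cdot'z)$ of $(\mathcal{A},\cdot')$, obtaining
\[
(x\cdot'y)\cdot'z=\big(\alpha^{-2}(x)\cdot\alpha^{-2}(y)\big)\cdot\alpha^{-1}(z),\qquad
x\cdot'(y\cdot'z)=\alpha^{-1}(x)\cdot\big(\alpha^{-2}(y)\cdot\alpha^{-2}(z)\big),
\]
and therefore, recognising the right-hand side as the Hom-associator evaluated at the $\alpha^{-2}$-images,
\[
as'(x,y,z)=as_\mathcal{A}\big(\alpha^{-2}(x),\alpha^{-2}(y),\alpha^{-2}(z)\big).
\]

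Finally, since $\alpha$ is even, so is $\alpha^{-2}$, whence $|\alpha^{-2}(x)|=|x|$ for homogeneous $x$. Consequently the left and right Hom-alternative identities \eqref{LeftHomAlt} and \eqref{RightHomAlt} for $as_\mathcal{A}$, applied at $(\alpha^{-2}(x),\alpha^{-2}(y),\alpha^{-2}(z))$, turn into exactly the two superalternativity identities \eqref{AltSupConditions} for $as'$; hence $(\mathcal{A},\cdot')$ is an alternative superalgebra, and together with the twisting identity this shows $(\mathcal{A},\cdot,\alpha)$ is of alternative-type. I do not expect a genuine obstacle here; the only points to be careful about are that $\alpha^{-1}$ inherits multiplicativity from $\alpha$ and that the $\mathbb{Z}_2$-degrees are unchanged under $\alpha^{-2}$, so that all sign factors appearing in \eqref{AltSupConditions} match those produced by \eqref{LeftHomAlt} and \eqref{RightHomAlt}.
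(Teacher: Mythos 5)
Your proposal is correct and follows essentially the same route as the paper: verify $\alpha(x\cdot'y)=x\cdot y$ via multiplicativity, then expand the untwisted associator of $\cdot'$ and reduce it to the Hom-alternative identities of $(\mathcal{A},\cdot,\alpha)$. The only cosmetic difference is that you write $as'(x,y,z)=as_{\mathcal{A}}(\alpha^{-2}(x),\alpha^{-2}(y),\alpha^{-2}(z))$ while the paper writes $as'(x,y,z)=\alpha^{-2}\bigl(as_{\mathcal{A}}(x,y,z)\bigr)$; these agree by multiplicativity of $\alpha^{-1}$, and both yield the same conclusion since $\alpha$ is even and bijective.
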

\begin{proof}
Using the bijectivity of $\alpha$ and the multiplicativity of $(\mathcal{A},\cdot,\alpha)$ (assumed throughout the article), one has, for all $x,y,z \in \mathcal{H}(\mathcal{A})$,
\begin{align*}
\alpha(x\cdot^{'}y)=\alpha(\alpha^{-1}(x)\cdot\alpha^{-1}(y)) =&\alpha(\alpha^{-1}(x))\cdot\alpha(\alpha^{-1}(y)))= x\cdot y,\\
  as(x, y, z)+(-1)^{|x||y|}as(y, x, z)
=&(x \cdot^{'} y)\cdot^{'} z-x\cdot^{'}(y\cdot^{'} z)+(-1)^{|x||y|}\big((y\cdot^{'} x)\cdot^{'} z-y\cdot^{'}(x\cdot^{'} z)\big)\\
=&\alpha^{-1}(\alpha^{-1}(x)\cdot \alpha^{-1}(y))\cdot \alpha^{-1}(z)-\alpha^{-1}(x)\cdot \alpha^{-1}(\alpha^{-1}(y)\cdot \alpha^{-1}(z))\\
&+(-1)^{|x||y|}(\alpha^{-1}(\alpha^{-1}(y)\cdot\alpha^{-1}(x))\cdot \alpha^{-1}(z) -\alpha^{-1}(y)\cdot\alpha^{-1}(\alpha^{-1}(x)\cdot \alpha^{-1}(z)))\\
=&(\alpha^{-2}(x)\cdot \alpha^{-2}(y))\cdot\alpha^{-1}(z) -\alpha^{-1}(x)\cdot\alpha^{-2}(y)\cdot \alpha^{-2}(z)))\\
&+(-1)^{|x||y|}((\alpha^{-2}(y)\cdot\alpha^{-2}(x))\cdot \alpha^{-1}(z) -\alpha^{-1}(y)\cdot(\alpha^{-2}(x)\cdot\alpha^{-2}(z)))\\
=&\alpha^{-2}\big((x \cdot y)\cdot\alpha(z) -\alpha(x)\cdot(y\cdot z)+(-1)^{|x||y|}((y\cdot x)\cdot\alpha(z) -\alpha(y)\cdot(x \cdot z))\big)\\
=&\alpha^{-2}(as_{\mathcal A}(x,y,z)+(-1)^{|x||y|}as_{\mathcal A}(y,x,z))\\
=&0
 \end{align*}
by the left Hom-alternative super-identity of $(\mathcal{A},\cdot,\alpha)$. 
The right alternative super-identity is proved similarly.
\end{proof}
\section{Pseudo-Euclidean and symplectic Hom-alternative superalgebras}\label{PEHomAlt}
 In this section, we give some important constructions of pseudo-Euclidean  Hom-alternative superalgebras which are
 a Hom $\mathbb{Z}_2$-graded  generalizaton of pseudo-Euclidean  alternative algebras given in \cite{HeddouBoulmane} (see also \cite{AlbuquerqueBenayadi, BenamorBenayadi} ). Moreover,  we introduce the concept of symplectic  Hom-alternative superalgebras and we explore the relationship between the symplectic and pseudo-Euclidean Hom-alternative superalgebras. 
 \subsection{Pseudo-Euclidean   Hom-alternative superalgebras}
\begin{defn}
 Let $(\mathcal{A},\cdot,\alpha)$ be a Hom-alternative superalgebra. A bilinear form $\Psi$ on $\mathcal{A}$ is
 \begin{enumerate}[label=\upshape{\arabic*.},left=5pt]
\item supersymmetric if $\Psi(x,y)= (-1)^{|x||y|}\Psi(y,x)$, $\forall x, y\in \mathcal{H}(\mathcal{A})$;
\item nondegenerate if $x,y\in \mathcal{A}$ satisfies $\Psi(x,y)=0$, $\forall  y\in \mathcal{H}(\mathcal{A})$, then $x=0$;
\item invariant if \begin{equation}\label{InvCondition}
\Psi(x\cdot y,z)=\Psi(x,y\cdot z),\ \quad\forall x,y,z\in \mathcal{H}(\mathcal{A}),\end{equation} \item $\phi$-invariant for a morphism map $\phi:\mathcal{A}\to\mathcal{A}$ if \begin{equation}\label{InvCondition}
\Psi(x\cdot y,\phi(z))=\Psi(\phi(x),y\cdot z),\ \quad\forall x,y,z\in \mathcal{H}(\mathcal{A}),    
\end{equation}
\item even if $\Psi(\mathcal{A}_{0},\mathcal{A}_{1})=\Psi(\mathcal{A}_{1},\mathcal{A}_{0})=0$,
\item odd if $\Psi(\mathcal{A}_{0},\mathcal{A}_{0})=\Psi(\mathcal{A}_{1},\mathcal{A}_{1})=0.$
\end{enumerate}
 \end{defn}
\begin{defn}
A pseudo-Euclidean Hom-alternative superalgebra ($\mathsf{PEHomAlt}$ superalgebra),   
is a quadruple $(\mathcal{A},\cdot,\alpha, \Psi)$ such that  $(\mathcal{A},\cdot ,\alpha)$ is a Hom-alternative superalgebra equipped with an invariant, supersymmetric, 
non-degenerate bilinear form $\Psi$ such that for all $x,y\in\mathcal{H}(\mathcal{A})$, 
\begin{equation}\label{AlphaSymm}
    \Psi(\alpha(x),\alpha(y))=\Psi(x,y).\end{equation}
 \end{defn}
 \begin{defn}
A $\phi$-pseudo-Euclidean Hom-alternative superalgebra 
($\phi$-$\mathsf{PEHomAlt}$ superalgebra) is a quadruple $(\mathcal{A},\cdot ,\alpha, \Psi,\phi)$ such that  $(\mathcal{A},\cdot ,\alpha)$ is a Hom-alternative superalgebra  equipped with an $\phi$-invariant supersymmetric, non-degenerate bilinear form $\Psi$ satisfying the condition \eqref{AlphaSymm}.
\end{defn}
\begin{re}
\begin{enumerate}[label=\upshape{\arabic*.},left=5pt]
\item 
One recovers $\mathsf{PEAlt}$ superalgebra when $\alpha = id$ in a $\mathsf{PEHomAlt}$ superalgebra. 
\item An $id_\mathcal{A}$-$\mathsf{PEHomAlt}$ superalgebra is a $\mathsf{PEHomAlt}$ superalgebra.\end{enumerate}
 \end{re}
 \begin{re}
Note that the invariant condition \eqref{AlphaSymm} is not the same as the one given in \textup{\cite{Benayadi-Makhlouf}}, where it was used the condition 
\begin{equation}\label{AlphaSymm1}
\Psi(\alpha(x),y)=\Psi(x,\alpha(y)),\quad x,y\in \mathcal{H}(\mathcal{A}).
\end{equation}
Note that the compatibility condition between $\alpha$ and the multiplication $\cdot$ is that $\alpha$ is an algebra
morphism. For the Hom-algebra $(\mathcal{A},\cdot,\alpha)$, the set of algebra automorphisms is a group, called the
automorphism group. Thus, it is natural to require that $\alpha$ and the bilinear form $\Psi$ satisfy a similar
compatibility condition. One can show that the set of linear maps that preserve the bilinear form $\Psi$ in the sense of \textup{\cite{Benayadi-Makhlouf}} is a group. Actually, it is the orthogonal group associated to the bilinear form. However, linear maps that preserve the bilinear form $\Psi$ in the sense of \eqref{AlphaSymm1} do not have such
properties.
\end{re}
 Let $(\mathcal{A},\cdot ,\alpha, \Psi)$ and $(\mathcal{A}',\cdot' ,\alpha', \Psi')$ two $\mathsf{PEHomAlt}$ superalgebras. A lineair map $f :\mathcal{A}\to\mathcal{A}'$ is a morphism of $\mathsf{PEHomAlt}$ superalgebras if it is a morphism of Hom-alternative superalgebras satisfying for all 
 $x,y\in\mathcal{H}(\mathcal{A})$, 
\begin{equation}\label{PseudoMorphsim}
     \Psi'(f(x),f(y))=\Psi(x,y).
\end{equation}     
\begin{thm}
Let $(\mathcal{A},\cdot,\alpha,\Psi)$ be a $\mathsf{PEHomAlt}$ superalgebra and  $\beta:\mathcal{A}\to\mathcal{A}$ be morphism of pseudo-Euclidean Hom-alternative superalgebra. Then $(\mathcal{A},\cdot_\beta=\beta \cdot,\beta\alpha, \Psi)$ 
is a $\beta$-$\mathsf{PEHomAlt}$ superalgebra.
\end{thm}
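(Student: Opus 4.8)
The plan is to verify, one by one, the four ingredients entering the definition of a $\beta$-$\mathsf{PEHomAlt}$ superalgebra for the quadruple $(\mathcal{A},\cdot_\beta,\beta\alpha,\Psi,\beta)$: (i) $(\mathcal{A},\cdot_\beta,\beta\alpha)$ is a Hom-alternative superalgebra; (ii) $\Psi$ is supersymmetric and non-degenerate; (iii) $\Psi$ is $\beta$-invariant for the twisted product $\cdot_\beta$; and (iv) the twisting map $\beta\alpha$ satisfies $\Psi(\beta\alpha(x),\beta\alpha(y))=\Psi(x,y)$. The key observation is that the hypothesis ``$\beta$ is a morphism of pseudo-Euclidean Hom-alternative superalgebra'' must be unpacked into its two components: $\beta$ is a morphism of the underlying Hom-alternative superalgebra (so $\beta(x\cdot y)=\beta(x)\cdot\beta(y)$ and $\beta\circ\alpha=\alpha\circ\beta$), and $\beta$ preserves the form, i.e. $\Psi(\beta(u),\beta(v))=\Psi(u,v)$ by \eqref{PseudoMorphsim}.

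For (i), since $\beta$ is in particular a morphism of the Hom-alternative superalgebra $(\mathcal{A},\cdot,\alpha)$, Theorem~\ref{TwistYauHomAltAlg} applies verbatim and shows that $(\mathcal{A},\cdot_\beta=\beta\cdot,\beta\alpha)$ is again Hom-alternative. Point (ii) needs nothing at all, because $\Psi$ is literally unchanged. For (iv), combine form-preservation with the original compatibility \eqref{AlphaSymm}: $\Psi(\beta\alpha(x),\beta\alpha(y))=\Psi(\alpha(x),\alpha(y))=\Psi(x,y)$.

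The only computation worth writing out is (iii). Using that $\beta$ is a (weak) morphism, $x\cdot_\beta y=\beta(x\cdot y)$, so on one hand $\Psi(x\cdot_\beta y,\beta(z))=\Psi(\beta(x\cdot y),\beta(z))=\Psi(x\cdot y,z)$ by \eqref{PseudoMorphsim}, and on the other hand $\Psi(\beta(x),y\cdot_\beta z)=\Psi(\beta(x),\beta(y\cdot z))=\Psi(x,y\cdot z)$; the two right-hand sides agree by the invariance of $\Psi$ with respect to the original multiplication. Hence $\Psi$ is $\beta$-invariant for $\cdot_\beta$, and $(\mathcal{A},\cdot_\beta,\beta\alpha,\Psi,\beta)$ is a $\beta$-$\mathsf{PEHomAlt}$ superalgebra.

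There is no real obstacle here; the argument is purely a matter of assembling Theorem~\ref{TwistYauHomAltAlg} with the definitions, the one subtlety being to remember that morphisms of pseudo-Euclidean Hom-alternative superalgebras carry the extra form-preserving property \eqref{PseudoMorphsim}, which is exactly what feeds points (iii) and (iv).
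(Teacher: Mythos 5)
Your proposal is correct and follows essentially the same route as the paper: both invoke Theorem~\ref{TwistYauHomAltAlg} for the twisted Hom-alternative structure and then verify the $\beta$-invariance of $\Psi$ and the compatibility $\Psi(\beta\alpha(x),\beta\alpha(y))=\Psi(\alpha(x),\alpha(y))=\Psi(x,y)$. The only cosmetic difference is in the invariance step, where the paper writes $x\cdot_\beta y=\beta(x)\cdot\beta(y)$ and applies the invariance of $\Psi$ directly to the elements $\beta(x),\beta(y),\beta(z)$, whereas you first strip the $\beta$'s using \eqref{PseudoMorphsim} and then apply invariance to $x,y,z$; both computations are equally valid.
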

\begin{proof} According to Theorem \ref{TwistYauHomAltAlg}, we have  $(\mathcal{A},\cdot_\beta=\beta \cdot,\beta\alpha)$ is a Hom-alternative superalgebra. Let $x,y,z\in \mathcal{H}(\mathcal{A})$, it is easy to check that $\Psi$ is supersymmetric and, we have 
\begin{align*}
    \Psi(x\cdot_\beta y,\beta(z))&=\Psi(\beta(x)\cdot\beta( y),\beta(z))=\Psi(\beta(x),\beta( y)\cdot\beta(z))=
    \Psi(\beta(x),y\cdot_\beta z)
\end{align*}
On the other hand, for $x,y\in \mathcal{H}(\mathcal{A})$  we have $$\Psi(\beta\alpha(x),\beta\alpha(y))=\Psi(\alpha(x),\alpha(y))=\Psi(x,y).$$
Therefore, $(\mathcal{A},\cdot_\beta,\beta\alpha, \Psi)$ 
is a $\beta$-$\mathsf{PEHomAlt}$ superalgebra.
\end{proof}
\begin{prop}
Let $(\mathcal{A},\cdot,\alpha,\Psi)$ be a regular $\alpha$-$\mathsf{PEHomAlt}$ superalgebra. Then $(\mathcal{A} , \cdot^{'}, \Psi)$
is a pseudo-Euclidean alternative superalgebra.
\end{prop}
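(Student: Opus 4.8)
The plan is to verify the three defining properties of a pseudo-Euclidean alternative superalgebra for the quadruple $(\mathcal{A},\cdot^{'},\Psi)$ in turn: that $(\mathcal{A},\cdot^{'})$ is an alternative superalgebra, that $\Psi$ is supersymmetric and non-degenerate, and that $\Psi$ is invariant with respect to the new product $\cdot^{'}$. The first point is immediate from Proposition \ref{HomAltToAlt}: since $(\mathcal{A},\cdot,\alpha)$ is a regular Hom-alternative superalgebra, it is of alternative-type with the very product $x\cdot^{'}y=\alpha^{-1}(x)\cdot\alpha^{-1}(y)$, so $(\mathcal{A},\cdot^{'})$ is an alternative superalgebra. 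The second point requires nothing new, because the bilinear form itself is not changed when passing from $\cdot$ to $\cdot^{'}$: supersymmetry $\Psi(x,y)=(-1)^{|x||y|}\Psi(y,x)$ and non-degeneracy are inherited verbatim from the hypothesis that $(\mathcal{A},\cdot,\alpha,\Psi)$ is an $\alpha$-$\mathsf{PEHomAlt}$ superalgebra.

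The only computation is the invariance $\Psi(x\cdot^{'}y,z)=\Psi(x,y\cdot^{'}z)$ for all $x,y,z\in\mathcal{H}(\mathcal{A})$. Here I would use the bijectivity of $\alpha$ together with the $\alpha$-invariance condition $\Psi(x\cdot y,\alpha(z))=\Psi(\alpha(x),y\cdot z)$. Concretely, substituting $\alpha^{-1}(x)$, $\alpha^{-1}(y)$, $\alpha^{-1}(z)$ for $x,y,z$ in that identity gives
\[
\Psi\big(\alpha^{-1}(x)\cdot\alpha^{-1}(y),\,\alpha(\alpha^{-1}(z))\big)=\Psi\big(\alpha(\alpha^{-1}(x)),\,\alpha^{-1}(y)\cdot\alpha^{-1}(z)\big),
\]
which is exactly $\Psi(x\cdot^{'}y,z)=\Psi(x,y\cdot^{'}z)$. (One should note that the $\alpha$-degree-preservation property \eqref{AlphaSymm} is not needed here, and indeed the untwisted form need satisfy no $\alpha$-compatibility condition, since the notion of pseudo-Euclidean alternative superalgebra involves no twisting map.)

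I do not anticipate a genuine obstacle: the statement is essentially an "untwisting" observation, and the subtle point — if any — is merely checking that the $\alpha$-invariance hypothesis \eqref{InvCondition} (rather than the alternative condition \eqref{AlphaSymm1}) is precisely what is required so that plain invariance of $\Psi$ re-emerges after replacing each argument by its $\alpha^{-1}$-image. Once that substitution is carried out the proof closes, and one concludes that $(\mathcal{A},\cdot^{'},\Psi)$ is a pseudo-Euclidean alternative superalgebra.
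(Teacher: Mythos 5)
Your proposal is correct and follows essentially the same route as the paper: it invokes Proposition \ref{HomAltToAlt} for the alternative superalgebra structure of $(\mathcal{A},\cdot^{'})$ and then derives invariance of $\Psi$ by exactly the same substitution of $\alpha^{-1}(x),\alpha^{-1}(y),\alpha^{-1}(z)$ into the $\alpha$-invariance identity (the paper phrases it as inserting $z=\alpha(\alpha^{-1}(z))$, which is the identical computation). Your added observations that supersymmetry and non-degeneracy carry over unchanged and that condition \eqref{AlphaSymm} is not needed are accurate and only make explicit what the paper leaves implicit.
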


\begin{proof} 
By Proposition \ref{HomAltToAlt}, 
$(\mathcal{A}, \cdot^{'})$ is an alternative superalgebra.
The invariance of $\Psi$ holds since for all $x, y, z\in \mathcal{H}(\mathcal{A})$,
\begin{align*}
\Psi(x\cdot^{'} y,z)=&\Psi(\alpha^{-1}(x)\cdot\alpha^{-1}(y),z)=\Psi(\alpha^{-1}(x)\cdot\alpha^{-1}(y),\alpha(\alpha^{-1}(z)))\\
=&\Psi(\alpha(\alpha^{-1}(x)),\alpha^{-1}(y)\cdot\alpha^{-1}(z))=
\Psi(x,y\cdot^{'} z),
\end{align*}
which completes the proof.
\end{proof}
\begin{prop}\label{opAlt}
Let  $(\mathcal{A},\cdot,\alpha,\Psi,\phi)$ be a $\phi$-$\mathsf{PEHomAlt}$ superalgebra. Then $(\mathcal{A},\cdot^{op},\alpha, \Psi,\phi )$ is $\phi$-$\mathsf{PEHomAlt}$ superalgebra, where $\cdot^{op}$ is defined in Proposition \ref{OpAlt}. 
\end{prop}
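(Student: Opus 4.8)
The plan is to check, clause by clause, the conditions in the definition of a $\phi$-$\mathsf{PEHomAlt}$ superalgebra for the quadruple $(\mathcal{A},\cdot^{op},\alpha,\Psi,\phi)$, and to observe that almost all of them are immediate.

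First, $(\mathcal{A},\cdot^{op},\alpha)$ is again a Hom-alternative superalgebra by Proposition \ref{OpAlt}, so that part needs nothing new; moreover $\cdot^{op}$ is still even, $\alpha$ is still multiplicative for it, and $\phi$ is still an even morphism of the new Hom-alternative superalgebra, since $\phi(x\cdot^{op}y)=-(-1)^{|x||y|}\phi(y\cdot x)=-(-1)^{|x||y|}\phi(y)\cdot\phi(x)=\phi(x)\cdot^{op}\phi(y)$ follows from $\phi$ being an even morphism for $\cdot$. Next, supersymmetry and nondegeneracy of $\Psi$ and the compatibility $\Psi(\alpha(x),\alpha(y))=\Psi(x,y)$ of \eqref{AlphaSymm} only involve $\Psi$ and $\alpha$ and not the product, so they are inherited verbatim from the hypothesis that $(\mathcal{A},\cdot,\alpha,\Psi,\phi)$ is a $\phi$-$\mathsf{PEHomAlt}$ superalgebra.

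Hence the only substantive point is the $\phi$-invariance of $\Psi$ for the new product, namely $\Psi(x\cdot^{op}y,\phi(z))=\Psi(\phi(x),y\cdot^{op}z)$ for all $x,y,z\in\mathcal{H}(\mathcal{A})$. I would prove this by a short chain: substitute $x\cdot^{op}y=-(-1)^{|x||y|}\,y\cdot x$; apply the $\phi$-invariance of $\Psi$ for $\cdot$ to rewrite $\Psi(y\cdot x,\phi(z))=\Psi(\phi(y),x\cdot z)$; use supersymmetry of $\Psi$ (with $|\phi(y)|=|y|$) to interchange the two arguments; apply the $\phi$-invariance of $\Psi$ for $\cdot$ once more, in the form $\Psi(x\cdot z,\phi(y))=\Psi(\phi(x),z\cdot y)$; and finally rewrite $z\cdot y=-(-1)^{|y||z|}\,y\cdot^{op}z$. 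Collecting the Koszul signs, the two factors $(-1)^{|x||y|}$ cancel against each other and the two factors $(-1)^{|y||z|}$ likewise cancel, leaving exactly $\Psi(\phi(x),y\cdot^{op}z)$. Conceptually this works because passing to $\cdot^{op}$ swaps a ``left'' form of invariance for a ``right'' one, which supersymmetry of $\Psi$ converts back.

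The main obstacle is purely the sign bookkeeping in this last chain: one must verify that the signs produced by the two rearrangements and by the two occurrences of $\cdot^{op}$ really do collapse to $+1$. This is routine and entirely parallel to the sign manipulations already carried out in the proof of Proposition \ref{OpAlt}. Once the $\phi$-invariance identity is established, every requirement in the definition is met, and $(\mathcal{A},\cdot^{op},\alpha,\Psi,\phi)$ is a $\phi$-$\mathsf{PEHomAlt}$ superalgebra.
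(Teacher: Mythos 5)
Your proposal is correct and follows essentially the same route as the paper: reduce to Proposition \ref{OpAlt} for the Hom-alternative structure, note that supersymmetry, nondegeneracy and \eqref{AlphaSymm} are product-independent, and verify $\phi$-invariance of $\Psi$ for $\cdot^{op}$ by a short chain alternating the definition of $\cdot^{op}$, $\phi$-invariance for $\cdot$, and supersymmetry, with the Koszul signs cancelling exactly as you describe.
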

\begin{proof}
By Proposition \ref{OpAlt}, $(\mathcal{A},\cdot^{op},\alpha)$ is a Hom-alternative superalgebra. It suffices to prove that $\Psi$ is
$\phi$-invariant under $\cdot^{op}$. Let $x, y, z\in\mathcal{H}(\mathcal{A})$. 
Then,
\begin{align*}
\Psi(x\cdot^{op}y,\phi(z))&=-(-1)^{|x||y|}\Psi(y\cdot x,\phi(z))\\&=-(-1)^{|x||y|+|x||z|+|y||z|}\Psi(\phi(z),y \cdot x)\\
&=-(-1)^{|x||y|+|x||z|+|y||z|}\Psi(y \cdot z,\phi(x))\\&=-(-1)^{|y||z|}\Psi(\phi(x),z \cdot y)
=\Psi(\phi(x),y \cdot^{op} z),
\end{align*}
which completes the proof.
\end{proof}
In \cite{JCQ} the authors introduce the notion of 
 Hom-Malcev superalgebra, which  is a triple $(\mathcal{A},[\cdot,\cdot],\alpha)$ consisting of a superspace $\mathcal{A}=\mathcal{A}_{0}\oplus\mathcal{A}_{1}$, an even bilinear map $[\cdot,\cdot]: \mathcal{A}\times \mathcal{A} \rightarrow \mathcal{A}$ and an even linear map $\alpha:  \mathcal{A}\rightarrow \mathcal{A}$, satisfying
 for all $x,y,z\in\mathcal{H}(\mathcal{A})$, 
\begin{align}
& [x, y]=-(-1)^{|x||y|}[y, x],
\\ 
& \begin{array}{l}
(-1)^{|y||z|}[\alpha([x, z]),\alpha([y, t]) \\ 
=[[[x, y],\alpha(z)],\alpha^{2}(t)] 
+(-1)^{|x|(|y|+|z|+|t|)}[[[y, z],\alpha(t)],\alpha^{2}(x)]\\ 
+(-1)^{(|x|+|y|)(|z|+|t|)}[[[z, t],\alpha(x)],\alpha^{2}(y)]
+(-1)^{|t|(|x|+|y|+|z|)}[[[t, x],\alpha(y)],\alpha^{2}(z)].
\end{array}
\end{align}
We will give a relationship between Hom-alternative superalgebra and Hom-Malcev superalgebra.

\begin{prop}[\hspace{-0.1mm}\cite{KFA}]
\label{malcev}\label{malcev} Let
$(\mathcal{A}, \cdot,\alpha)$ be a Hom-alternative superalgebra, then $(\mathcal{A},[\cdot,\cdot], \alpha)$ is a Hom-Malcev superalgebra, where 
for all $x,y\in\mathcal{H}(\mathcal{A})$, 
\begin{align}\label{identitymalcev}
  [x,y]=x\cdot y-(-1)^{|x||y|}y\cdot x. 
\end{align}
\end{prop}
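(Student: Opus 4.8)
The first of the two Hom-Malcev axioms, super-antisymmetry $[x,y]=-(-1)^{|x||y|}[y,x]$, is immediate from \eqref{identitymalcev}; the whole content of the statement is therefore the quaternary identity, which I would verify by a direct but carefully organized expansion.

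First I would reformulate everything in terms of the Hom-associator. Expanding each bracket via \eqref{identitymalcev} inside the nested brackets on both sides and using multiplicativity $\alpha\circ\mu=\mu\circ\alpha^{\otimes 2}$, every monomial becomes a signed triple product of factors built from $x,y,z,t$ and powers of $\alpha$, and each difference of two parenthesizations is a Hom-associator $as_{\mathcal{A}}$ evaluated on such arguments. Second, I would exploit the key structural fact that in a Hom-alternative superalgebra the Hom-associator is \emph{super-alternating}: by \eqref{LeftHomAlt}, \eqref{RightHomAlt} and \eqref{Alter2} it changes sign, up to the Koszul sign, under the transposition of any two adjacent arguments, so it carries the sign representation of $S_{3}$ in the graded sense. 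This lets me bring every occurrence of $as_{\mathcal{A}}$ into a fixed normal form for the ordering of its three arguments.

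Third, I would record two families of auxiliary identities and substitute them: (i) the Hom-super Teichm\"uller identity, valid in \emph{any} multiplicative Hom-superalgebra, which expresses $as_{\mathcal{A}}(\mu(w,x),\alpha(y),\alpha(z))-as_{\mathcal{A}}(\alpha(w),\mu(x,y),\alpha(z))+as_{\mathcal{A}}(\alpha(w),\alpha(x),\mu(y,z))$ as $\mu(\alpha^{2}(w),as_{\mathcal{A}}(x,y,z))+\mu(as_{\mathcal{A}}(w,x,y),\alpha^{2}(z))$ (a pure re-parenthesization identity, proved by expanding and using multiplicativity); and (ii) the Hom-super Moufang-type identities obtained from \eqref{LeftHomAlt}--\eqref{RightHomAlt} by specializing repeated arguments and polarizing (legitimate since $\mathrm{char}\,\mathbb{K}=0$). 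After these substitutions, the monomials not controlled by the alternative laws cancel by (i), and the remaining ones cancel in packets reflecting the cyclic structure of the right-hand side, by the super-alternating symmetry established above. To roughly halve the casework I would use Proposition \ref{OpAlt}: since $x\cdot^{op}y-(-1)^{|x||y|}y\cdot^{op}x=x\cdot y-(-1)^{|x||y|}y\cdot x$, the Hom-Malcev superalgebra attached to $\mathcal{A}$ coincides with the one attached to $\mathcal{A}^{op}$, so every identity proved ``from the left'' yields a free ``right-hand'' companion.

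The main obstacle is not conceptual but the $\mathbb{Z}_{2}$-graded bookkeeping: every rearrangement drags a Koszul sign, and the identity has many terms, so the real work is to choose the normal form for associators and the order of applying (i) and (ii) so that the cancellations are structural rather than accidental. As a consistency check — and as an alternative argument in the regular case — one may note that if $\alpha$ is bijective then Proposition \ref{HomAltToAlt} exhibits $(\mathcal{A},\cdot,\alpha)$ as a twist of a genuine alternative superalgebra, for which Malcev-admissibility is classical and compatible with the twist; the point of the direct proof is precisely that it does not assume $\alpha$ invertible.
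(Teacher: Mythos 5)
The paper offers no proof of this proposition: it is imported verbatim from \cite{KFA}, so there is no in-house argument to compare yours against. Judged on its own terms, your proposal is a plan rather than a proof. The parts you make precise are correct: super-antisymmetry is immediate from \eqref{identitymalcev}; the Hom-associator of a Hom-alternative superalgebra is super-alternating by \eqref{LeftHomAlt}, \eqref{RightHomAlt} and \eqref{Alter2}; the Hom-super Teichm\"uller identity you state does hold in any multiplicative Hom-superalgebra (a short expansion confirms it, with no Koszul signs, since it is a pure re-parenthesization); and both the reduction via Proposition \ref{OpAlt} and the regular-case argument via Proposition \ref{HomAltToAlt} together with Theorem \ref{TwistYauHomAltAlg} are sound.

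The gap is that the decisive step --- verifying that, after expanding $(-1)^{|y||z|}[\alpha([x,z]),\alpha([y,t])]$ and the four cyclic terms of the right-hand side of the Hom-Malcev identity into Hom-associators, everything cancels --- is asserted (``cancel in packets reflecting the cyclic structure'') rather than carried out. That is precisely where the content of Malcev-admissibility lives: already in the ungraded case with $\alpha=\id$ the proof is a substantial computation whose engine is the relation between the Jacobian of the commutator and the associator (essentially $J(x,y,z)=6\,as(x,y,z)$) together with specific linearized Moufang identities; alternating symmetry of the associator plus the Teichm\"uller identity alone do not visibly suffice. Your item (ii) gestures at the needed Moufang-type identities but never states them in Hom-super form, so one cannot check that your packets actually close up, nor track the Koszul signs and the placement of the powers of $\alpha$ (the target identity involves $\alpha([x,z])$, $\alpha(z)$ and $\alpha^{2}(t)$ in different slots, so the bookkeeping of $\alpha$'s is itself nontrivial and is exactly where such arguments usually break). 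To turn this into a proof you would need to state and prove those auxiliary Hom-super identities explicitly and then exhibit the cancellation term by term, or else reduce to the known (Hom-)alternative case, or simply cite \cite{KFA} as the paper does.
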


Now, we introduce the notion of $\phi$-quadratic Hom-Malcev superalgebra analogous  to $\phi$-$\mathsf{PEHomAlt}$ superalgebra. Then, we show that, any  $\phi$-$\mathsf{PEHomAlt}$ superalgebra induces a $\phi$-quadratic Hom-Malcev superalgebra.
\begin{defn}
A $\phi$-quadratic Hom-Malcev superalgebra is a Hom-Malcev superalgebra $(\mathcal{A},[\cdot,\cdot], \alpha)$ endowed with an even bilinear supersymmetric non degenerate form $B:\mathcal{A}\times \mathcal{A} \rightarrow \mathbb{K}$ and and even linear map $\phi:\mathcal{A} \rightarrow \mathcal{A}$, satisfying for all  $x,y,z\in\mathcal{H}(\mathcal{A})$ the conditions \eqref{AlphaSymm} and 
\begin{equation*}\label{InvQuad}
   B([x,y],\phi(z))+(-1)^{|x||y|}B(\phi(y),[x,z])=0.
\end{equation*}
\end{defn}
\begin{prop}
 Let $(\mathcal{A},\cdot,\alpha,\Psi)$ be a $\phi$-$\mathsf{PEHomAlt}$ superalgebra. Then, tuple $(\mathcal{A},[\cdot,\cdot],\alpha,\Psi )$, where 
 for all $x,y\in\mathcal{H}(\mathcal{A})$,
 $$[x,y]=x\cdot y-(-1)^{|x||y|}y\cdot x, $$
 is a $\phi$-quadratic Hom-Malcev superalgebra.
\end{prop}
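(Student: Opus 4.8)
The plan is to check the two defining ingredients of a $\phi$-quadratic Hom-Malcev superalgebra in turn. That $(\mathcal{A},[\cdot,\cdot],\alpha)$ with $[x,y]=x\cdot y-(-1)^{|x||y|}y\cdot x$ is a Hom-Malcev superalgebra is precisely Proposition \ref{malcev}, so no new argument is needed for that part. The remaining structural requirements on $B:=\Psi$ (evenness, supersymmetry, non-degeneracy, and the compatibility $\Psi(\alpha(x),\alpha(y))=\Psi(x,y)$) are inherited verbatim from the hypothesis that $(\mathcal{A},\cdot,\alpha,\Psi,\phi)$ is a $\phi$-$\mathsf{PEHomAlt}$ superalgebra. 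Hence the only thing to establish is the invariance identity
\[
\Psi([x,y],\phi(z))+(-1)^{|x||y|}\Psi(\phi(y),[x,z])=0,\qquad x,y,z\in\mathcal{H}(\mathcal{A}).
\]

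I would prove this by direct expansion using only the $\phi$-invariance of $\Psi$ in the form $\Psi(u\cdot v,\phi(w))=\Psi(\phi(u),v\cdot w)$ together with supersymmetry. Expanding the bracket and applying $\phi$-invariance twice gives
\[
\Psi([x,y],\phi(z))=\Psi(\phi(x),y\cdot z)-(-1)^{|x||y|}\Psi(\phi(y),x\cdot z),
\]
while expanding $[x,z]$ and applying $\phi$-invariance once more gives
\[
(-1)^{|x||y|}\Psi(\phi(y),[x,z])=(-1)^{|x||y|}\Psi(\phi(y),x\cdot z)-(-1)^{|x||y|+|x||z|}\Psi(\phi(y),z\cdot x).
\]
Adding the two expressions, the terms $\pm(-1)^{|x||y|}\Psi(\phi(y),x\cdot z)$ cancel, and the identity reduces to
\[
\Psi(\phi(x),y\cdot z)=(-1)^{|x|(|y|+|z|)}\Psi(\phi(y),z\cdot x).
\]
This is obtained from supersymmetry (which moves $\phi(x)$, an element of degree $|x|$ since $\phi$ is even, past $y\cdot z$ at the cost of $(-1)^{|x|(|y|+|z|)}$) followed by one further use of $\phi$-invariance with the relabelling $(x,y,z)\mapsto(y,z,x)$, i.e.\ $\Psi(y\cdot z,\phi(x))=\Psi(\phi(y),z\cdot x)$. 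Combining these yields the invariance identity, so $(\mathcal{A},[\cdot,\cdot],\alpha,\Psi,\phi)$ is a $\phi$-quadratic Hom-Malcev superalgebra.

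There is no genuine obstacle here; the computation is routine and the only care needed is in tracking the Koszul signs, in particular confirming that the sign $(-1)^{|x||y|+|x||z|}$ generated by expanding $[x,z]$ agrees with the sign $(-1)^{|x|(|y|+|z|)}$ produced by supersymmetry on the other side. Since $\alpha$, $\phi$ and the multiplication are all even, no case distinction on the parities of $x,y,z$ is required.
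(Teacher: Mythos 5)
Your proposal is correct and follows essentially the same route as the paper: cite Proposition \ref{malcev} for the Hom-Malcev structure and then verify the invariance of $\Psi$ by a direct expansion using $\phi$-invariance and supersymmetry, with the Koszul signs handled correctly throughout. The only (cosmetic) difference is that you verify the defining identity $\Psi([x,y],\phi(z))+(-1)^{|x||y|}\Psi(\phi(y),[x,z])=0$ literally, whereas the paper establishes the equivalent associativity-type form $\Psi([x,y],\phi(z))=\Psi(\phi(x),[y,z])$ and leaves the translation to the stated condition implicit; if anything, your version is the more faithful one.
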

\begin{proof}
Since  $(\mathcal{A},\cdot,\alpha,\Psi)$ is a $\mathsf{PEHomAlt}$ superalgebra, 
\begin{align*}
\Psi([x,y],\phi(z))
=&\Psi(x\cdot y-(-1)^{|x||y|}y\cdot x,\phi(z))\\
=&\Psi(x\cdot y,\phi(z))-(-1)^{|x||y|}\Psi(y\cdot x,\phi(z))\\
=&\Psi(x,y\cdot \phi(z))-(-1)^{|x||y|+|z|(|x|+|y|)}\Psi(\phi(z),y\cdot x))\\
=&\Psi(\phi(x),y\cdot z)-(-1)^{|x||y|+|z|(|x|+|y|)}\Psi(z\cdot y,\phi(x))\\
=&\Psi(\phi(x),y\cdot z)-(-1)^{|z||y|}\Psi(\phi(x),z\cdot y))\\
=&\Psi(\phi(x),y\cdot z-(-1)^{|z||y|}z\cdot y)\\
=&\Psi(\phi(x),[y,z]).
\end{align*}
Then $(\mathcal{A},[\cdot,\cdot],\alpha,\Psi )$ is a $\phi$-quadratic Hom-Malcev superalgebra.
\end{proof}
 \subsection{Symplectic Hom-alternative superalgebras}

\begin{defn}\label{symplectic}
Let $(\mathcal{A},\cdot,\alpha)$ be a Hom-alternative superalgebra. A super-skew-symmetric, non-degenerate even bilinear form $\omega$ on $(\mathcal{A},\cdot,\alpha)$ is said to be closed if it  satisfies 
\begin{align}\label{symp}
(-1)^{|x||z|}\omega(\alpha(x),y\cdot z)+(-1)^{|y||x|}\omega(\alpha(y),z\cdot x)+(-1)^{|z||y|}\omega(\alpha(z),x\cdot y)=0.  
\end{align}
for all $x,y,z\in\mathcal{H}(\mathcal{A}).$
Then $\omega$ is said to be symplectic. A Hom-alternative superalgebra $(\mathcal{A},\cdot,\alpha)$ with a symplectic form is called a symplectic Hom-alternative superalgebra. 
\end{defn}
\begin{defn}
Let $(\mathcal{A},\cdot,\alpha)$ be a Hom-alternative superalgebra. An homogeneous linear map $\mathfrak{D} :\mathcal{A} \rightarrow \mathcal{A}$ is called $\alpha^k$-superderivation if it satisfies 
\begin{equation}\label{derivation}
  \mathfrak{D}(x\cdot y)=\mathfrak{D}(x)\cdot \alpha^k(y)+(-1)^{|x||\mathfrak{D}|}\alpha^k(x)\cdot \mathfrak{D}(y),
\end{equation}
for all $x, y\in\mathcal{H}(\mathcal{A})$.
The set of all homogeneous $\alpha^k$-superderivations is denoted by $\mathfrak{D}er_{\alpha^k}(\mathcal{A})$. An $\alpha^k$-superderivation is called superderivation for $k=0.$ Set $$\mathfrak{D}er(\mathcal{A})=\bigoplus_{k\geq 0}\mathfrak{D}er_{\alpha^k}(\mathcal{A}).$$
The space $\mathfrak{D}er(\mathcal{A})$ with the commutator is a Lie superalgebra. Let $(\mathcal{A},\cdot ,\alpha,\Psi,\phi)$ be a $\phi$-$\mathsf{PEHomAlt}$ superalgebra. Define $\mathfrak{D}er_a(\mathcal{A})$ as the subspace of $\mathfrak{D}er(\mathcal{A})$ consisting of the antisymmetric 
$\alpha^k$-superderivations satisfying $\phi\mathfrak{D}=\mathfrak{D}\phi$ and 
\begin{align}\label{antiDer}
    &\Psi(\mathfrak D(x),y)+(-1)^{|x||\mathfrak{D}|}\Psi(x,\mathfrak D(y))=0,\quad \forall x,y\in \mathcal{H}(\mathcal{A}).
\end{align}
\end{defn}
\begin{lem}
    With the above notions, $\mathfrak{D}er_a(\mathcal{A})$ is a $\mathbb{Z}_2$-graded subalgebra of $\mathfrak{D}er(\mathcal{A})$.
\end{lem}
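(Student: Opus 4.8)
The plan is to verify that $\mathfrak{D}er_a(\mathcal{A})$ is closed under the graded commutator of $\mathfrak{D}er(\mathcal{A})$ and that it carries a $\mathbb{Z}_2$-grading. The grading is essentially free: by construction $\mathfrak{D}er_a(\mathcal{A})$ is the span inside $\mathfrak{D}er(\mathcal{A})$ of \emph{homogeneous} $\alpha^k$-superderivations which commute with $\phi$ and satisfy \eqref{antiDer}, so, setting $\mathfrak{D}er_a(\mathcal{A})_{\bar i} := \mathfrak{D}er_a(\mathcal{A}) \cap \mathfrak{D}er(\mathcal{A})_{\bar i}$, one gets $\mathfrak{D}er_a(\mathcal{A}) = \mathfrak{D}er_a(\mathcal{A})_{\bar 0} \oplus \mathfrak{D}er_a(\mathcal{A})_{\bar 1}$, since each spanning element lies in one of the two homogeneous pieces and a map that is simultaneously even and odd vanishes.

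For closure, I would take homogeneous $\mathfrak{D}_1 \in \mathfrak{D}er_{\alpha^k}(\mathcal{A})$ and $\mathfrak{D}_2 \in \mathfrak{D}er_{\alpha^l}(\mathcal{A})$, both lying in $\mathfrak{D}er_a(\mathcal{A})$, and show that the bracket $\mathfrak{D} := [\mathfrak{D}_1,\mathfrak{D}_2] = \mathfrak{D}_1\mathfrak{D}_2 - (-1)^{|\mathfrak{D}_1||\mathfrak{D}_2|}\mathfrak{D}_2\mathfrak{D}_1$, homogeneous of degree $|\mathfrak{D}_1|+|\mathfrak{D}_2|$, again belongs to $\mathfrak{D}er_a(\mathcal{A})$. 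That $\mathfrak{D}$ is an $\alpha^{k+l}$-superderivation is exactly the statement, recalled above, that $\mathfrak{D}er(\mathcal{A})$ with the commutator is a Lie superalgebra. The identity $\phi\mathfrak{D} = \mathfrak{D}\phi$ follows at once from $\phi\mathfrak{D}_1 = \mathfrak{D}_1\phi$ and $\phi\mathfrak{D}_2 = \mathfrak{D}_2\phi$, since $\phi$ then commutes with both compositions $\mathfrak{D}_1\mathfrak{D}_2$ and $\mathfrak{D}_2\mathfrak{D}_1$.

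The remaining, and only genuinely computational, point is that $\mathfrak{D}$ satisfies \eqref{antiDer}, i.e.\ $\Psi(\mathfrak{D}(x),y) + (-1)^{|x||\mathfrak{D}|}\Psi(x,\mathfrak{D}(y)) = 0$ for all $x,y \in \mathcal{H}(\mathcal{A})$. Here I would expand $\Psi(\mathfrak{D}(x),y)$ into the four contributions coming from $\mathfrak{D}_1\mathfrak{D}_2$ and $\mathfrak{D}_2\mathfrak{D}_1$ and, in each, transfer the two superderivations across $\Psi$ one factor at a time by applying \eqref{antiDer} for $\mathfrak{D}_1$ and for $\mathfrak{D}_2$ separately; each transfer across the inner factor produces a sign involving the degree of the already-applied superderivation together with $|\mathfrak{D}_i|$. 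After the two transfers every term is a scalar multiple of $\Psi(x,\mathfrak{D}_1\mathfrak{D}_2(y))$ or $\Psi(x,\mathfrak{D}_2\mathfrak{D}_1(y))$, and a short check shows the accumulated Koszul signs recombine precisely into $-(-1)^{|x||\mathfrak{D}|}\Psi(x,\mathfrak{D}(y))$; notably only the antisymmetry \eqref{antiDer} of $\mathfrak{D}_1$ and $\mathfrak{D}_2$ is used, not the supersymmetry of $\Psi$. The main obstacle is thus just keeping the signs straight through these manipulations. Once this is done, $\mathfrak{D} \in \mathfrak{D}er_a(\mathcal{A})$, so $\mathfrak{D}er_a(\mathcal{A})$ is closed under the bracket, and together with the grading above it is a $\mathbb{Z}_2$-graded subalgebra of $\mathfrak{D}er(\mathcal{A})$.
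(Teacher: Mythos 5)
Your proposal is correct and follows essentially the same route as the paper: the $\phi$-commutation and the superderivation property of the bracket are noted as immediate, and the only real computation is transferring $\mathfrak{D}_1$ and $\mathfrak{D}_2$ across $\Psi$ one at a time via \eqref{antiDer} and checking that the Koszul signs reassemble into $-(-1)^{|x||\mathfrak{D}|}\Psi(x,[\mathfrak{D}_1,\mathfrak{D}_2](y))$. Your additional remarks on the $\mathbb{Z}_2$-grading and on the weight $\alpha^{k+l}$ of the bracket are correct and slightly more explicit than the paper, which leaves those points implicit.
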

\begin{proof}
 Let $\mathfrak D,\mathfrak D'\in\mathfrak{D}er_a(\mathcal{A})$. It is easy to show that $[\mathfrak D,\mathfrak D']\phi=\phi[\mathfrak D,\mathfrak D']$. On the other hand, for all $x,y\in\mathcal{H}(\mathcal{A})$, we have
 \begin{align*}
     \Psi([\mathfrak D,\mathfrak D'](x),y)&=\Psi(\mathfrak D\circ\mathfrak D'(x),y)-(-1)^{|\mathfrak{D}||\mathfrak{D}'|}\Psi(\mathfrak D'\circ\mathfrak D(x),y)\\&=-(-1)^{|\mathfrak{D}|(|\mathfrak{D}'|+|x|)}\Psi(\mathfrak D'(x),\mathfrak D(y))+(-1)^{|x||\mathfrak{D}'|}\Psi(\mathfrak D(x),\mathfrak D'(y))\\&=(-1)^{|\mathfrak{D}||\mathfrak{D}'|}(-1)^{|x|(|\mathfrak{D}|+|\mathfrak{D}'|)}\Psi(x,\mathfrak D'\circ\mathfrak D(y))-(-1)^{|x|(|\mathfrak{D}|+|\mathfrak{D}'|)}\Psi(x),\mathfrak D\circ\mathfrak D'(y))\\&=-(-1)^{|x|(|\mathfrak{D}|+|\mathfrak{D}'|)}\Psi(x,[\mathfrak D,\mathfrak D]'(y)).
 \end{align*}
 Thus $[\mathfrak D,\mathfrak D']\in\mathfrak{D}er_a(\mathcal{A})$. Therefore, $\mathfrak{D}er_a(\mathcal{A})$ is a $\mathbb{Z}_2$-graded subalgebra of $\mathfrak{D}er(\mathcal{A})$.
\end{proof}
\begin{lem}\label{HomPEtoSymp}
Let $(\mathcal{A},\cdot ,\alpha,\Psi)$ be a $\alpha$-$\mathsf{PEHomAlt}$ superalgebra and $\mathfrak{D}$ is an even antisymmetric superderivation. Then $(\mathcal{A},\cdot,\alpha,\omega)$ is a symplectic Hom-alternative superalgebra, where
$$\omega(x,y)=\Psi(\mathfrak{D}(x),y),\quad x,y\in \mathcal{H}(\mathcal{A}).$$
\end{lem}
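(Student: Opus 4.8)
The plan is to verify directly that $\omega(x,y)=\Psi(\mathfrak{D}(x),y)$ meets the four requirements of a symplectic form in Definition \ref{symplectic}: evenness, super-skew-symmetry, non-degeneracy, and the closedness identity \eqref{symp}. Throughout I will use that $\mathfrak{D}$ is even, so $|\mathfrak{D}(x)|=|x|$ and every sign factor attached to $\mathfrak{D}$ is trivial; that, $\mathcal{A}$ being an $\alpha$-$\mathsf{PEHomAlt}$ superalgebra, $\Psi$ is $\alpha$-invariant, i.e. $\Psi(x\cdot y,\alpha(z))=\Psi(\alpha(x),y\cdot z)$; that $\mathfrak{D}$ commutes with $\alpha$ (it lies in $\mathfrak{D}er_a(\mathcal{A})$, and here $\phi=\alpha$); the Leibniz rule $\mathfrak{D}(x\cdot y)=\mathfrak{D}(x)\cdot y+x\cdot\mathfrak{D}(y)$; and the antisymmetry relation \eqref{antiDer}, which for even $\mathfrak{D}$ reads $\Psi(\mathfrak{D}(x),y)=-\Psi(x,\mathfrak{D}(y))$.

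The first three properties are quick. Evenness of $\omega$ follows from evenness of $\Psi$ and $\mathfrak{D}(\mathcal{A}_i)\subseteq\mathcal{A}_i$. For super-skew-symmetry I would chain the two relations just recalled, $\omega(x,y)=\Psi(\mathfrak{D}(x),y)=-\Psi(x,\mathfrak{D}(y))=-(-1)^{|x||y|}\Psi(\mathfrak{D}(y),x)=-(-1)^{|x||y|}\omega(y,x)$, the middle step being the supersymmetry of $\Psi$ together with $|\mathfrak{D}(y)|=|y|$. Non-degeneracy of $\omega$ follows from that of $\Psi$ and the injectivity of $\mathfrak{D}$: if $\Psi(\mathfrak{D}(x),y)=0$ for all homogeneous $y$, then $\mathfrak{D}(x)=0$, hence $x=0$.

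The substantive point is the closedness identity \eqref{symp}. Write $S$ for its left-hand side after substituting $\omega(a,b)=\Psi(\mathfrak{D}(a),b)$; using $\mathfrak{D}\alpha=\alpha\mathfrak{D}$, the first summand reads $(-1)^{|x||z|}\Psi(\alpha\mathfrak{D}(x),y\cdot z)$ and the others are its cyclic images. I would then rewrite $S$ in two ways. Applying the supersymmetry of $\Psi$ and then its $\alpha$-invariance to each summand (with the sign simplification $(-1)^{|x||z|+|x|(|y|+|z|)}=(-1)^{|x||y|}$, and cyclically) gives $S=\sum_{\mathrm{cyc}}(-1)^{|x||y|}\Psi(\alpha(y),z\cdot\mathfrak{D}(x))$; applying $\alpha$-invariance and then supersymmetry gives instead $S=\sum_{\mathrm{cyc}}(-1)^{|y||z|}\Psi(\alpha(z),\mathfrak{D}(x)\cdot y)$. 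Adding the two, the six terms pair off by their first entry ($\alpha(x)$, $\alpha(y)$ or $\alpha(z)$), and in each pair the Leibniz rule collapses, for instance, $z\cdot\mathfrak{D}(x)+\mathfrak{D}(z)\cdot x$ into $\mathfrak{D}(z\cdot x)$, leaving $2S=\sum_{\mathrm{cyc}}(-1)^{|x||y|}\Psi(\alpha(y),\mathfrak{D}(z\cdot x))$. Finally, applying \eqref{antiDer} once more turns each $\Psi(\alpha(y),\mathfrak{D}(z\cdot x))$ into $-\Psi(\mathfrak{D}\alpha(y),z\cdot x)$, and the resulting cyclic sum is precisely $-S$. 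Hence $2S=-S$, so $3S=0$, and since $\mathbb{K}$ has characteristic $0$ we get $S=0$, which is \eqref{symp}.

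The main obstacle will be the sign bookkeeping in this last step: one has to carry the parities carefully through the supersymmetry of $\Psi$ and the $\alpha$-invariance so that, after summing the two expressions for $S$, the coefficients of the three pairs of terms genuinely coincide (enabling the Leibniz rule), and then observe that a further use of \eqref{antiDer} reproduces $-S$, which is what forces $3S=0$. I would also note in passing that neither the Hom-alternative identities \eqref{LeftHomAlt}--\eqref{RightHomAlt} nor the condition \eqref{AlphaSymm} is needed here; the argument uses only the multiplicativity of $\mathcal{A}$, the $\alpha$-invariance and supersymmetry of $\Psi$, and the Leibniz rule, $\alpha$-equivariance, and antisymmetry of $\mathfrak{D}$.
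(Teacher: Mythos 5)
Your proof is correct, but your treatment of the closedness identity takes a genuinely different route from the paper's. The paper collapses everything onto a single slot: it applies the antisymmetry \eqref{antiDer} only to the first summand, producing $-(-1)^{|x||z|}\Psi(\alpha(x),\mathfrak{D}(y\cdot z))$, and then uses $\mathfrak{D}\alpha=\alpha\mathfrak{D}$ together with the $\alpha$-invariance and supersymmetry of $\Psi$ to rewrite the other two summands as $(-1)^{|x||z|}\Psi(\alpha(x),\mathfrak{D}(y)\cdot z)$ and $(-1)^{|x||z|}\Psi(\alpha(x),y\cdot\mathfrak{D}(z))$, so that a single application of the Leibniz rule kills the sum outright. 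Your symmetrization --- rewriting $S$ in two ways, adding to get $2S=\sum_{\mathrm{cyc}}(-1)^{|x||y|}\Psi(\alpha(y),\mathfrak{D}(z\cdot x))$, and then recognizing $-S$ after one more use of \eqref{antiDer} to conclude $3S=0$ --- is sign-correct throughout and perfectly valid, but it is longer and needs the characteristic of $\mathbb{K}$ to differ from $3$ (harmless here, since the paper assumes characteristic $0$), whereas the paper's direct cancellation needs no division at all. One shared caveat: non-degeneracy of $\omega$ genuinely requires $\mathfrak{D}$ to be injective (equivalently invertible, in finite dimension), which the lemma's statement omits; you invoke injectivity without flagging it as an added hypothesis, and the paper silently inserts the word ``invertible'' at the start of its own proof, so the two treatments are on equal footing on this point. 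Your closing remark that neither the Hom-alternative identities nor condition \eqref{AlphaSymm} is used is accurate and matches what the paper's computation actually consumes.
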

\begin{proof}
First, we assume that there is an invertible derivation $\mathfrak{D}$ on $\mathcal{A}$ such that $\Psi(\mathfrak{D}(x),y)=-(-1)^{|x||\mathfrak{D}|}\Psi(x,\mathfrak{D}(y))$. For any $x,y\in \mathcal{H}(\mathcal{A})$, define $\omega(x,y)=\Psi(\mathfrak{D}(x),y)$. Thus $\omega$ is a non-degenerate and super-skew
symmetric bilinear form. Now, we show that it is a symplectic structure. For all $x,y,z\in \mathcal{H}(\mathcal{A})$, we deduce 
\begin{align*}
   &(-1)^{|x||z|}\omega(\alpha(x),y\cdot z)+(-1)^{|y||x|}\omega(\alpha(y),z\cdot x)+(-1)^{|z||y|}\omega(\alpha(z),x\cdot y)\\
  = & (-1)^{|x||z|}\Psi(\mathfrak{D}(\alpha(x)),y\cdot z)+(-1)^{|y||x|}\Psi(\mathfrak{D}(\alpha(y)),z\cdot x)+(-1)^{|z||y|}\Psi(\mathfrak{D}(\alpha(z)),x\cdot y)\\
   =& -(-1)^{|x||z|}\Psi(\alpha(x),\mathfrak{D}(y\cdot z))+(-1)^{|y||x|}\Psi(\alpha(\mathfrak{D}(y)),z\cdot x)+(-1)^{|z||y|}\Psi(\alpha(\mathfrak{D}(z)),x\cdot y)\\
   =& -(-1)^{|x||z|}\Psi(\alpha(x),\mathfrak{D}(y\cdot z))+(-1)^{|y||x|}\Psi(\mathfrak{D}(y)\cdot z,\alpha( x))+(-1)^{|z||x|}\Psi(x\cdot y,\alpha(\mathfrak{D}(z)))\\
   =& -(-1)^{|x||z|}\Psi(\alpha(x),\mathfrak{D}(y\cdot z))+(-1)^{|z||x|}\Psi(\alpha(x),\mathfrak{D}(y)\cdot z)+(-1)^{|z||x|}\Psi(\alpha(x),y\cdot \mathfrak{D}(z))=0.
   \end{align*}
which completes the proof.
\end{proof}
\begin{prop}
Let   $(\mathcal{A},\cdot,\alpha,\omega)$ be a symplectic Hom-alternative superalgebra. Then $(\mathcal{A}^{op},\cdot^{op},\alpha,\Psi,\omega)$ is a  symplectic Hom-alternative superalgebra.
\end{prop}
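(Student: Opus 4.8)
The plan is to reduce the statement to a single sign computation. The conditions defining a symplectic form on a Hom-alternative superalgebra split into those referring to $\omega$ alone --- super-skew-symmetry, non-degeneracy and evenness --- and the closedness identity \eqref{symp}, which is the only one involving the multiplication. The first group is literally unchanged when we pass from $\cdot$ to $\cdot^{op}$, and by Proposition \ref{OpAlt} the triple $(\mathcal{A}^{op},\cdot^{op},\alpha)$ is again a Hom-alternative superalgebra; so the whole proof amounts to verifying \eqref{symp} for $\cdot^{op}$.

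For this I would substitute the defining formula $a\cdot^{op}b=-(-1)^{|a||b|}b\cdot a$ into the left-hand side of \eqref{symp} written for $\cdot^{op}$ and the triple $(x,y,z)$. Each of the products $y\cdot^{op}z$, $z\cdot^{op}x$, $x\cdot^{op}y$ contributes one extra sign, so that the left-hand side becomes
\[
-(-1)^{|x||z|+|y||z|}\omega(\alpha(x),z\cdot y)
-(-1)^{|y||x|+|z||x|}\omega(\alpha(y),x\cdot z)
-(-1)^{|z||y|+|x||y|}\omega(\alpha(z),y\cdot x).
\]
Pulling out the common factor $-(-1)^{|x||y|+|y||z|+|z||x|}$ --- using that the exponents lie in $\mathbb{Z}_2$, so $-t$ and $t$ coincide as exponents --- the bracket that remains is exactly
\[
(-1)^{|x||y|}\omega(\alpha(x),z\cdot y)+(-1)^{|y||z|}\omega(\alpha(y),x\cdot z)+(-1)^{|z||x|}\omega(\alpha(z),y\cdot x),
\]
which is the left-hand side of \eqref{symp} for $\cdot$ applied to the permuted triple $(x,z,y)$, that is, with the roles of $y$ and $z$ interchanged. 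Since \eqref{symp} holds for all homogeneous triples of $(\mathcal{A},\cdot,\alpha,\omega)$, this bracket vanishes, hence so does the whole expression, and $\omega$ is closed for $\cdot^{op}$.

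It follows that $(\mathcal{A}^{op},\cdot^{op},\alpha,\omega)$ is a symplectic Hom-alternative superalgebra, as claimed. The one point requiring care is the sign bookkeeping in the middle step --- verifying that all three summands really do share the common factor $-(-1)^{|x||y|+|y||z|+|z||x|}$ after reconciling the exponents modulo $2$ --- but this is routine, and there is no genuine obstacle; conceptually the proposition just records the invariance of the closedness identity under permuting its arguments, exactly in the spirit of the proof of Proposition \ref{OpAlt}.
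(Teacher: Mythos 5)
Your proposal is correct and follows essentially the same route as the paper: substitute $a\cdot^{op}b=-(-1)^{|a||b|}b\cdot a$ into the closedness identity and recognize the result as (a sign times) the closedness identity for the permuted triple $(x,z,y)$. The sign bookkeeping in your middle step checks out, and if anything your explicit factoring of $-(-1)^{|x||y|+|y||z|+|z||x|}$ is cleaner than the paper's own intermediate rewriting.
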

\begin{proof}
By Proposition \ref{OpAlt}, $(\mathcal{A}^{op},\cdot^{op},\alpha)$ is a Hom-alternative superalgebra. Since $\mathcal{A}$ is a symplectic Hom-alternative superalgebra, then $\omega$ is a non-degenerate and super-skew
symmetric bilinear form on $\mathcal{A}^{op}$. We now prove that $\omega$ is an symplectic structure on $\mathcal{A}^{op}$. For $x,y \in\mathcal{A}^{op}$, we have
\begin{align*}
   &(-1)^{|x||z|}\omega(\alpha(x),y\cdot^{op} z)+(-1)^{|y||x|}\omega(\alpha(y),z\cdot^{op} x)+(-1)^{|z||y|}\omega(\alpha(z),x\cdot^{op} y)\\
   =&-(-1)^{|y||z|}(-1)^{|x||z|}\omega(\alpha(x),z\cdot y)-(-1)^{|z||x|}(-1)^{|y||x|}\omega(\alpha(y),x\cdot z)\\ 
   &\hspace{6cm} -(-1)^{|x||y|}(-1)^{|z||y|}\omega(\alpha(z),y\cdot x)\\
    =&(-1)^{|y||z|}\omega(\alpha(z),x\cdot y)+(-1)^{|z||x|}\omega(\alpha(x),y\cdot z)+(-1)^{|x||y|}\omega(\alpha(y),z\cdot x)=0.
 \qedhere  \end{align*}
\end{proof}
\begin{defn}
Let $(\mathcal{A},[\cdot,\cdot],\alpha)$ be a Hom-Malcev superalgebra and $\omega$ be a super-skew-symmetric non-degenerate even bilinear form on $\mathcal{A}$ satisfying:
$$(-1)^{|x||z|}\omega(\alpha(x),[y,z])+(-1)^{|y||x|}\omega(\alpha(y),[z,x])+(-1)^{|z||y|}\omega(\alpha(z),[x,y])=0,$$
for all $x,y,z\in\mathcal{H}( \mathcal{A})$.
Then $(\mathcal{A},[\cdot,\cdot],\alpha,\omega)$ is called a symplectic Hom-Malcev superalgebra.
If $(\mathcal{A},[\cdot,\cdot],\alpha,B)$ is a quadratic Hom-Malcev superalgebra and $\omega$ is a symplectic structure on $\mathcal{A}$, then  $(\mathcal{A},[\cdot,\cdot],\alpha,B,\omega)$ is called a quadratic symplectic Hom-Malcev superalgebra.
\end{defn}
\begin{prop}
Let $(\mathcal{A},\cdot ,\alpha,\omega)$ be a  symplectic Hom-alternative superalgebra. Then $(\mathcal{A},[\cdot,\cdot],\alpha,\omega )$ is a  symplectic Hom-Malcev superalgebra, where $[\cdot,\cdot]$ is defined by \eqref{identitymalcev}.
\end{prop}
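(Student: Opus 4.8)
The plan is to check the two ingredients in the definition of a symplectic Hom-Malcev superalgebra separately. First, by Proposition \ref{malcev} the triple $(\mathcal{A},[\cdot,\cdot],\alpha)$ with $[x,y]=x\cdot y-(-1)^{|x||y|}y\cdot x$ is already a Hom-Malcev superalgebra, so nothing has to be proved there; and $\omega$ is literally the same bilinear form, so it stays super-skew-symmetric, non-degenerate and even. Hence the only thing left is the closedness identity for $[\cdot,\cdot]$, i.e.
\[
(-1)^{|x||z|}\omega(\alpha(x),[y,z])+(-1)^{|y||x|}\omega(\alpha(y),[z,x])+(-1)^{|z||y|}\omega(\alpha(z),[x,y])=0
\]
for all homogeneous $x,y,z\in\mathcal{H}(\mathcal{A})$.

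To establish this I would substitute $[y,z]=y\cdot z-(-1)^{|y||z|}z\cdot y$ and its cyclic analogues into the left-hand side and expand $\omega$ by bilinearity. This yields six terms which split into two natural groups of three. The first group, collecting the contributions of $y\cdot z$, $z\cdot x$ and $x\cdot y$ with their signs, is precisely the left-hand side of the closedness identity \eqref{symp} for the product $\cdot$ evaluated at $(x,y,z)$, and therefore vanishes since $\omega$ is symplectic on $(\mathcal{A},\cdot,\alpha)$ by Definition \ref{symplectic}.

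For the second group, collecting the contributions of $z\cdot y$, $x\cdot z$ and $y\cdot x$, I would compare it with \eqref{symp} evaluated at the transposed triple $(x,z,y)$: a short $\mathbb{Z}_2$-sign computation shows that each of the three terms picks up the same extra factor $(-1)^{|x||y|+|y||z|+|z||x|}$ relative to the corresponding term of \eqref{symp} at $(x,z,y)$. Hence the second group equals $(-1)^{|x||y|+|y||z|+|z||x|}$ times the (vanishing) left-hand side of \eqref{symp} at $(x,z,y)$, and so is zero as well. Adding the two groups gives the required identity, so $(\mathcal{A},[\cdot,\cdot],\alpha,\omega)$ is a symplectic Hom-Malcev superalgebra. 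The only delicate point is the bookkeeping of the signs in the second group (the observation that the common factor is exactly $(-1)^{|x||y|+|y||z|+|z||x|}$); everything else is a direct appeal to Proposition \ref{malcev} and to the symplectic hypothesis on $(\mathcal{A},\cdot,\alpha)$.
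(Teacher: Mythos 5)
Your proof is correct and follows essentially the same route as the paper: expand the bracket into six terms, observe that the three terms coming from $y\cdot z$, $z\cdot x$, $x\cdot y$ are exactly the closedness identity \eqref{symp} at $(x,y,z)$, and that the three remaining terms are a common sign times \eqref{symp} at the transposed triple $(x,z,y)$. (The common factor is in fact $-(-1)^{|x||y|+|y||z|+|z||x|}$ rather than $(-1)^{|x||y|+|y||z|+|z||x|}$, but since it multiplies a vanishing expression this slip does not affect the argument.)
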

\begin{proof}
According to Proposition \ref{malcev} and Definition \ref{symplectic},  we obtain
\begin{align*}
  &(-1)^{|x||z|}\omega(\alpha(x),[y,z])+(-1)^{|y||x|}\omega(\alpha(y),[z,x])+(-1)^{|z||y|}\omega(\alpha(z),[x,y])\\=& (-1)^{|x||z|}\big(\omega(\alpha(x),y\cdot z)-(-1)^{|y||z|}\omega(\alpha(x),z\cdot y)\big)\\&+(-1)^{|y||x|}\big(\omega(\alpha(y),z\cdot x)-(-1)^{|z||x|}\omega(\alpha(y),x\cdot z)\big)\\&+(-1)^{|z||y|}\big(\omega(\alpha(z),x\cdot y)-(-1)^{|x||y|}\omega(\alpha(z),y\cdot x)\big)\\=&-(-1)^{|x||z|+|y||z|}\omega(\alpha(x),z\cdot y) -(-1)^{|y||x|+|z||x|}\omega(\alpha(y),x\cdot z)-(-1)^{|z||y|+|x||y|}\omega(\alpha(z),y\cdot x)\\=&-(-1)^{|x||z|+|y||z|+|x||y|}\Big((-1)^{|x||y|}\omega(\alpha(x),z\cdot y) +(-1)^{|y||z|}\omega(\alpha(y),x\cdot z)+(-1)^{|x||z|}\omega(\alpha(z),y\cdot x)\Big)\\
  =&0.
 \end{align*}
 Then $(\mathcal{A},[\cdot,\cdot],\alpha,\omega )$ is a  symplectic Hom-Malcev superalgebra.
\end{proof}

\section{Weighted Rota-Baxter operators Hom-alternative superalgebras }\label{RotaBaxter}
In this section, we introduce Hom-post-alternative superalgebras as a generalization of Hom-pre-alternative superalgebras (see \cite{IB}).  Therefore, Hom-post-alternative superalgebras can be viewed as the underlying algebraic
structures of weighted Rota-Baxter on Hom-alternative superalgebras. A Hom-post-alternative superalgebra also gives rise
to a new Hom-alternative superlgebra. Hom-post-alternative superalgebras are analogues for Hom-alternative superalgebras of Hom-tridendriform  superalgebras. 
\subsection{ Hom-post-alternative superalgebras}
\begin{defn} \label{post-hom}
A Hom-post-alternative superalgebra is a tuple  
$(\mathcal{A}, \prec, \succ,\cdot, \alpha)$ in which $\mathcal{A}$  is a supervector space,
 $\prec, \succ,\cdot : \mathcal{A}\times \mathcal{A}\rightarrow \mathcal{A}$ are even bilinear maps and $\alpha : \mathcal{A}\rightarrow \mathcal{A}$ an even linear map such that,
for any $x, y, z\in\mathcal{H}(\mathcal{A})$,
\begin{align}
&
(x \cdot y) \cdot \alpha(z)- \alpha(x) \cdot (y \cdot z) + (-1)^{|x||y|}(y \cdot x) \cdot \alpha(z)  - (-1)^{|x||y|}\alpha(y) \cdot(x \cdot z)=0,\label{Hom-postaltr1} \\
&
(x \cdot y)\cdot \alpha(z)- \alpha(x) \cdot (y\cdot z) +(-)^{|y||z|} (x\cdot z) \cdot \alpha(y) -(-1)^{|y||z|}\alpha(x)\cdot (z \cdot y)=0,\label{Hom-postaltr2} \\
&
(x \cdot y) \prec \alpha(z) -\alpha(x) \cdot (y \prec z) + (-1)^{|x||y|}(y \cdot x) \prec \alpha(z)  -(-1)^{|x||y|} \alpha(y) \cdot (x \prec z)=0,\label{Hom-postaltr3} \\
&
(x \succ y) \cdot \alpha(z)- \alpha(x) \succ (y \cdot z) + (-1)^{|y||z|}(x \succ z) \cdot \alpha(y)  - (-1)^{|y||z|}\alpha(x) \succ (z \cdot y)=0,\label{Hom-postaltr4} \\
&
(y \succ x) \cdot \alpha(z)- \alpha(x) \cdot (y \succ z)+ (-1)^{|x||y|}(x \prec y) \cdot \alpha(z)  - (-1)^{|x||y|}\alpha(y) \succ (x \cdot z)=0,\label{Hom-postaltr5} \\
&
(z \prec x)\cdot \alpha(y)- \alpha(z) \cdot (x \succ y)+(-1)^{|x||y|}(z\cdot y) \prec \alpha(x) -(-1)^{|x||y|} \alpha(z) \cdot (y \prec x)  =0, \label{Hom-postaltr6}\\
&
(x\succ y) \prec \alpha(z) - \alpha(x)\succ(y \prec z) + (-1)^{|x||y|}(y \prec x) \prec \alpha(z) - (-1)^{|x||y|}\alpha(y) \prec (x \bullet z) = 0,\label{Hom-postaltr7}\\
&
(x\succ y)\prec \alpha(z) - \alpha(x)\succ(y \prec z) +(-1)^{|y||z|} (x \bullet z)\succ\alpha(y) -(-1)^{|y||z|}\alpha(x)\succ(z\succ y) = 0,\label{Hom-postaltr8}\\
&
(x\bullet y) \succ \alpha(z) -\alpha(x)\succ(y \succ z) + (-1)^{|x||y|}(y \bullet x) \succ \alpha(z) - (-1)^{|x||y|}\alpha(y) \succ (x \succ z) = 0,\label{Hom-postaltr9}\\
&
(z\prec x)\prec \alpha(y) - \alpha(z)\prec(x \bullet y) +(-1)^{|x||y|} (z \prec y)\prec \alpha(x) - (-1)^{|x||y|}\alpha(z)\prec(y\bullet x) = 0.\label{Hom-postaltr10}
\end{align}
where $x\bullet y=x\succ y+x\prec y+x\cdot y$.
\end{defn}
\begin{re}
If the operation $"\cdot"$ in the Hom-post-alternative superalgebra  $(\mathcal{A}, \prec , \succ , \cdot,\alpha  )$  is trivial \textup{(}$x\cdot y=0$ for all $x,y\in \mathcal{H}(\mathcal{A})$\textup{)}, then we recover the  Hom-pre-alternative superalgebra introduced in \textup{\cite{IB}}. Then the identities \eqref{Hom-postaltr1}-\eqref{Hom-postaltr10} reduce in Hom-pre-alternative superalgebras case to
\begin{align}\label{Hom-preAlt1}
  &\mathfrak{ass}_m(x,y,z)  +(-1)^{|x||y|}\mathfrak{ass}_r(y,x,z)=0,\\
  \label{Hom-preAlt2}
  &\mathfrak{ass}_m(x,y,z)  +(-1)^{|z||y|}\mathfrak{ass}_l(x,z,y)=0,\\
  \label{Hom-preAlt3}
  &\mathfrak{ass}_l(x,y,z)  +(-1)^{|x||y|}\mathfrak{ass}_l(y,x,z)=0,\\
  \label{Hom-preAlt4}
  &\mathfrak{ass}_r(x,y,z)  +(-1)^{|z||y|}\mathfrak{ass}_r(x,z,y)=0,
\end{align}
where for all $x,y,z\in \mathcal{H}(\mathcal{A})$, and 
$x\bullet y=x\succ y+x\prec y$,
\begin{align*}
&\mathfrak{ass}_l(x,y,z)=(x\bullet y) \succ \alpha(z) -\alpha(x)\succ(y \succ z), \\&\mathfrak{ass}_m(x,y,z)=(x \succ y) \prec \alpha(z) - \alpha(x) \succ (y \prec z),\\
&\mathfrak{ass}_r(x,y,z)=
(x\prec y)\prec \alpha(z) - \alpha(x)\prec(y \bullet z).
\end{align*}
\end{re}
\begin{thm}\label{HomPstAltToHomAlt}
Let $(\mathcal{A}, \prec , \succ , \cdot,\alpha  )$ be a Hom-post-alternative superalgebra. Let us define new bilinear operation for all $x,y\in \mathcal{H}(\mathcal{A})$ by 
\begin{equation}
x\bullet y= x\prec y+ x\succ y+ x\cdot y.
\end{equation}
Then $(\mathcal{A},\bullet,\alpha)$ is a Hom-alternative superalgebra.
\end{thm}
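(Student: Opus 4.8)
The plan is to show that the single operation $\bullet = \prec + \succ + \cdot$ satisfies both the left Hom-alternative identity \eqref{LeftHomAlt} and the right Hom-alternative identity \eqref{RightHomAlt}. The natural strategy is to expand the Hom-associator $as_{\mathcal{A}}(x,y,z)$ with respect to $\bullet$ into its nine constituent pieces. Concretely, $(x\bullet y)\bullet\alpha(z)$ and $\alpha(x)\bullet(y\bullet z)$ each split, by bilinearity, into a sum over all choices of which of the three operations $\prec,\succ,\cdot$ appears in the outer and inner slot; so $as_{\mathcal{A}}(x,y,z)$ with respect to $\bullet$ is a sum of eighteen terms. The key observation is that the left-alternativity combination $as_{\mathcal{A}}(x,y,z)+(-1)^{|x||y|}as_{\mathcal{A}}(y,x,z)$, once fully expanded, regroups exactly into (a $\mathbb{K}$-linear combination of) the left-hand sides of the defining identities \eqref{Hom-postaltr1}, \eqref{Hom-postaltr3}, \eqref{Hom-postaltr5}, \eqref{Hom-postaltr6}, \eqref{Hom-postaltr7}, \eqref{Hom-postaltr9}, \eqref{Hom-postaltr10}, each of which vanishes by hypothesis. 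Similarly, the right-alternativity combination $as_{\mathcal{A}}(x,y,z)+(-1)^{|y||z|}as_{\mathcal{A}}(x,z,y)$ regroups into the identities \eqref{Hom-postaltr2}, \eqref{Hom-postaltr4}, \eqref{Hom-postaltr6}, \eqref{Hom-postaltr8}, \eqref{Hom-postaltr10} (together with those symmetric in the first two slots).

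The steps I would carry out, in order, are: (1) write $x\bullet y = x\prec y + x\succ y + x\cdot y$ and expand $as_{\mathcal{A}}^{\bullet}(x,y,z) = (x\bullet y)\bullet\alpha(z) - \alpha(x)\bullet(y\bullet z)$ as a sum of $18$ trilinear terms, organized by which operation sits outermost; (2) form the left-alternative sum $as_{\mathcal{A}}^{\bullet}(x,y,z)+(-1)^{|x||y|}as_{\mathcal{A}}^{\bullet}(y,x,z)$ and sort the $36$ resulting terms according to the ``type'' of the outer operation; (3) recognize that the terms with outer operation $\cdot$ assemble into \eqref{Hom-postaltr1}; the terms with outer operation $\prec$ split into the block matching \eqref{Hom-postaltr3} plus the blocks matching \eqref{Hom-postaltr7} and \eqref{Hom-postaltr10} (after using $x\bullet z = x\prec z + x\succ z + x\cdot z$ to recombine); and the terms with outer operation $\succ$ assemble into \eqref{Hom-postaltr5}, \eqref{Hom-postaltr9}, and the appropriate symmetrization of \eqref{Hom-postaltr6}; each block is zero, so the sum is zero; (4) repeat the identical bookkeeping for the right-alternative sum, now landing on \eqref{Hom-postaltr2}, \eqref{Hom-postaltr4}, \eqref{Hom-postaltr6}, \eqref{Hom-postaltr8}, \eqref{Hom-postaltr10}. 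Conclude that $(\mathcal{A},\bullet,\alpha)$ is both left and right Hom-alternative, hence a Hom-alternative superalgebra. Multiplicativity of $\alpha$ with respect to $\bullet$ is inherited termwise from its multiplicativity with respect to $\prec,\succ,\cdot$ (which is built into the standing assumption on Hom-superalgebras), so nothing extra is needed there.

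The main obstacle is purely organizational rather than conceptual: one must verify that the Koszul signs produced by reordering $x$ and $y$ in the left-alternative sum (and $y,z$ in the right-alternative sum) exactly match the signs $(-1)^{|x||y|}$, $(-1)^{|y||z|}$ appearing in \eqref{Hom-postaltr1}--\eqref{Hom-postaltr10}, and that after substituting $\bullet = \prec+\succ+\cdot$ into terms like $(y\bullet x)\prec\alpha(z)$ or $\alpha(y)\prec(x\bullet z)$ inside \eqref{Hom-postaltr7} and \eqref{Hom-postaltr10}, every one of the $36$ expanded terms is accounted for exactly once, with the correct coefficient. This is a finite, mechanical check; I would present it by displaying the expansion of $as_{\mathcal{A}}^{\bullet}(x,y,z)+(-1)^{|x||y|}as_{\mathcal{A}}^{\bullet}(y,x,z)$ once, grouping the eighteen relevant summands into the seven named identities, and then simply remarking that the right-alternative case is obtained by the symmetric computation. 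The hom-structure plays no essential role beyond carrying $\alpha$ through the identities verbatim, so the proof is a graded, twisted version of the classical computation that post-alternative algebras are alternative-admissible.
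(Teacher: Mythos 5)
Your proposal is correct and takes essentially the same route as the paper: expand the Hom-associator of $\bullet$ by trilinearity into its $\prec$, $\succ$, $\cdot$ constituents, form the left- (resp.\ right-) alternative sum, and regroup the resulting terms into the defining identities \eqref{Hom-postaltr1}--\eqref{Hom-postaltr10}, each of which vanishes. The paper's proof is exactly this expansion for the left Hom-alternative super-identity (with the right one declared analogous), and is in fact less explicit than you are about which identities absorb which blocks of terms.
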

\begin{proof}
 Let us prove the left Hom-alternative super-identity. For any $x,y,z\in\mathcal{H}(\mathcal{A})$
\begin{align*}
&as_{\mathcal{A}} (x, y, z)+(-1)^{|x||y|}as_{\mathcal{A}}(y, x, z)\\=&
(x\bullet y)\bullet \alpha(z)-\alpha(x)\bullet(y\bullet z)\\
&+(-1)^{|x||y|} (y\circ x)\circ \alpha(z)-(-1)^{|x||y|}\alpha(y)\bullet(x\bullet z)\\
= &(x\bullet y)\prec \alpha(z)+(x\bullet y)\succ \alpha(z)\\ 
&+(x\bullet y)\cdot \alpha(z) -\alpha(x)\prec(y\bullet z)\\
&-\alpha(x)\succ(y\bullet z)-\alpha(x)\cdot(y\bullet z)\\
 &+(-1)^{|x||y|}(y\bullet x)\prec \alpha(z)
 +(-1)^{|x||y|}(y\bullet x)\succ \alpha(z)\\ 
 &+(-1)^{|x||y|}(y\bullet x)\cdot \alpha(z)-(-1)^{|x||y|}\alpha(y)\prec(x\bullet z)\\
 &-(-1)^{|x||y|}\alpha(y)\succ(x\bullet z)-(-1)^{|x||y|}\alpha(y)\cdot(x\bullet z)\\
= &(x\prec y)\prec \alpha(z)+(x\succ y)\prec \alpha(z)\\
&+(x\cdot y)\prec \alpha(z)+(x\bullet y)\succ \alpha(z)\\
&+(x\prec y)\cdot \alpha(z)+(x\succ y)\cdot \alpha(z)\\
 &+(x\cdot y)\cdot \alpha(z)-\alpha(x)\prec(y\bullet z)\\
 &-\alpha(x)\succ(y\prec z)- \alpha(x)\succ(y\succ z)\\
 &-\alpha(x)\succ(y\cdot z) -\alpha(x)\cdot(y\prec z)\\
&- \alpha(x)\cdot(y\succ z)-\alpha(x)\cdot(y\cdot z)\\ 
&+(-1)^{|x||y|}(y\prec x)\prec \alpha(z)+(-1)^{|x||y|}(y\succ x)\prec \alpha(z)\\ 
&+(-1)^{|x||y|}(y\cdot x)\prec \alpha(z)+(-1)^{|x||y|}(y\bullet x)\succ \alpha(z)\\ 
&+(-1)^{|x||y|}(y\prec x)\cdot \alpha(z)+(-1)^{|x||y|}(y\succ x)\cdot \alpha(z) \\ 
&+(-1)^{|x||y|}(y\cdot x)\cdot \alpha(z)-(-1)^{|x||y|}\alpha(y)\prec(x\bullet z)\\ 
&-(-1)^{|x||y|}\alpha(y)\succ(x\prec z)-(-1)^{|x||y|} \alpha(y)\succ(x\succ z)\\ 
&-(-1)^{|x||y|}\alpha(y)\succ(x\cdot z) -(-1)^{|x||y|}\alpha(y)\cdot(x\prec z)\\ 
&-(-1)^{|x||y|} \alpha(y)\cdot(x\succ z)-(-1)^{|x||y|}\alpha(y)\cdot(x\cdot z) =0.
\end{align*}
The right Hom-alternative super-identity is proved analogously.
\end{proof}
\begin{cor}[\hspace{-0.1mm}\cite{IB}]
Let $(\mathcal{A}, \prec, \succ, \alpha)$ be a Hom-pre-alterna\-tive superalgebra. If the operation $\bullet:\mathcal{A}\times \mathcal{A}$ is defined for all $x, y\in \mathcal{H}(\mathcal{A})$ by  
$x\bullet y=x\prec y+x\succ y,$ 
then $(\mathcal{A}, \bullet, \alpha)$ is a Hom-alternative superalgebra. 
\end{cor}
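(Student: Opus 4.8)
The plan is to obtain this as an immediate corollary of Theorem \ref{HomPstAltToHomAlt}, exploiting the fact --- recorded in the Remark following Definition \ref{post-hom} --- that a Hom-pre-alternative superalgebra is nothing but a Hom-post-alternative superalgebra whose third operation $\cdot$ is identically zero. So, starting from a Hom-pre-alternative superalgebra $(\mathcal{A}, \prec, \succ, \alpha)$, I would first set $x \cdot y = 0$ for all $x, y \in \mathcal{H}(\mathcal{A})$ and observe that $(\mathcal{A}, \prec, \succ, \cdot, \alpha)$ is a Hom-post-alternative superalgebra. Indeed, by the cited Remark, when $\cdot = 0$ the ten identities \eqref{Hom-postaltr1}--\eqref{Hom-postaltr10} of Definition \ref{post-hom} collapse: \eqref{Hom-postaltr1}--\eqref{Hom-postaltr2} become trivial, and the remaining ones reduce precisely to the four defining identities \eqref{Hom-preAlt1}--\eqref{Hom-preAlt4} of a Hom-pre-alternative superalgebra, which hold by hypothesis. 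One should also note that in this situation the element $x \bullet y = x \succ y + x \prec y + x \cdot y$ from Definition \ref{post-hom} agrees with the combination $x \bullet y = x \succ y + x \prec y$ used in \eqref{Hom-preAlt1}--\eqref{Hom-preAlt4}, so there is no clash of notation.

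Once this is in place, I would simply apply Theorem \ref{HomPstAltToHomAlt} to the Hom-post-alternative superalgebra $(\mathcal{A}, \prec, \succ, \cdot, \alpha)$: it yields that $(\mathcal{A}, \bullet, \alpha)$ with $x \bullet y = x \prec y + x \succ y + x \cdot y$ is a Hom-alternative superalgebra. Since $x \cdot y = 0$, this $\bullet$ is exactly $x \bullet y = x \prec y + x \succ y$, which is the operation in the statement, so the conclusion follows.

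There is essentially no obstacle here: the only step requiring any verification is the reduction of the ten post-alternative identities to the four pre-alternative ones under $\cdot = 0$, and that is a routine substitution already carried out in the Remark after Definition \ref{post-hom}. Alternatively, for a self-contained argument one could set $\cdot = 0$ directly in the long computation in the proof of Theorem \ref{HomPstAltToHomAlt}, whereupon most terms vanish and the survivors reassemble into \eqref{Hom-preAlt1}--\eqref{Hom-preAlt4}; but invoking the theorem is cleaner.
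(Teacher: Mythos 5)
Your proposal is correct and follows exactly the route the paper intends: the corollary is stated right after Theorem \ref{HomPstAltToHomAlt} with no separate proof, precisely because it is the special case $\cdot = 0$, using the reduction of the ten post-alternative identities to the four pre-alternative ones recorded in the remark after Definition \ref{post-hom}. Nothing further is needed.
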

The Hom-alternative superalgebra $(\mathcal{A}, \bullet, \alpha)$ is called the associated Hom-alternative superalgebra of 
$(\mathcal{A}, \prec, \succ, \alpha)$). We call $(\mathcal{A}, \prec, \succ, \alpha)$ a compatible Hom-pre-alternative superalgebra
structure on the Hom-alternative superalgebra.

\begin{thm}
Let $(\mathcal{A}, \prec, \succ, \cdot,\alpha)$ be a Hom-post-alternative superalgebra and  $\beta:\mathcal{A}\to\mathcal{A}$ be morphism on $\mathcal{A}$. Then $(\mathcal{A}, \prec_\beta=\beta\prec, \succ_\beta=\beta\succ,\cdot_{\beta}=\beta\cdot, \beta\alpha)$ is a Hom-post-alternative superalgebra.
\end{thm}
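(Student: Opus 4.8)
The plan is to invoke the Yau-twisting principle already used in Theorem~\ref{TwistYauHomAltAlg}. Since $\beta$ is a morphism, it is an even linear map that is multiplicative for all three operations, $\beta(x\prec y)=\beta(x)\prec\beta(y)$, $\beta(x\succ y)=\beta(x)\succ\beta(y)$, $\beta(x\cdot y)=\beta(x)\cdot\beta(y)$, and commutes with the twisting map, $\beta\alpha=\alpha\beta$. First I would record two preliminary facts. (i) The map $\beta\alpha$ is even and multiplicative for each twisted operation: for $\cdot_\beta$ one has $(\beta\alpha)(x\cdot_\beta y)=\beta\alpha\beta(x\cdot y)$, while using multiplicativity of $\beta$ then of $\alpha$ together with $\alpha\beta=\beta\alpha$, $\cdot_\beta\big((\beta\alpha)(x),(\beta\alpha)(y)\big)=\beta\big(\beta\alpha(x)\cdot\beta\alpha(y)\big)=\beta^2(\alpha(x)\cdot\alpha(y))=\beta^2\alpha(x\cdot y)=\beta\alpha\beta(x\cdot y)$; the identical computation works for $\prec_\beta$ and $\succ_\beta$. (ii) Writing $x\bullet_\beta y=x\prec_\beta y+x\succ_\beta y+x\cdot_\beta y$, multiplicativity of $\beta$ gives $x\bullet_\beta y=\beta(x\prec y+x\succ y+x\cdot y)=\beta(x\bullet y)$.

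The core observation is that every monomial occurring in \eqref{Hom-postaltr1}--\eqref{Hom-postaltr10} has the form $\diamond_1\big(\diamond_2(u,v),(\beta\alpha)(w)\big)$ or $\diamond_1\big((\beta\alpha)(u),\diamond_2(v,w)\big)$, where $\diamond_1\in\{\prec_\beta,\succ_\beta,\cdot_\beta\}$, $\diamond_2\in\{\prec_\beta,\succ_\beta,\cdot_\beta,\bullet_\beta\}$, and $(u,v,w)$ is a permutation of $(x,y,z)$. Using facts (i)--(ii), each such monomial equals $\beta^2$ applied to the corresponding monomial built from $\prec,\succ,\cdot,\alpha$: for instance $(x\succ_\beta y)\prec_\beta(\beta\alpha)(z)=\beta\big(\beta(x\succ y)\prec\beta\alpha(z)\big)=\beta\big(\beta((x\succ y)\prec\alpha(z))\big)=\beta^2\big((x\succ y)\prec\alpha(z)\big)$, and likewise $(\beta\alpha)(y)\prec_\beta(x\bullet_\beta z)=\beta\big(\beta\alpha(y)\prec\beta(x\bullet z)\big)=\beta^2\big(\alpha(y)\prec(x\bullet z)\big)$. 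Because $\beta$ is even, every sign coefficient $(-1)^{|x||y|}$, $(-1)^{|y||z|}$ is preserved. Hence the left-hand side of the $k$-th twisted identity is exactly $\beta^2$ of the left-hand side of the $k$-th original identity, which vanishes since $(\mathcal{A},\prec,\succ,\cdot,\alpha)$ is Hom-post-alternative. Combined with (i), this shows $(\mathcal{A},\prec_\beta,\succ_\beta,\cdot_\beta,\beta\alpha)$ is a (multiplicative) Hom-post-alternative superalgebra.

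I do not expect a genuine obstacle here: the argument is pure bookkeeping, as in Theorem~\ref{TwistYauHomAltAlg}. The only point needing attention is the uniform handling of the $\bullet_\beta$-terms in \eqref{Hom-postaltr7}--\eqref{Hom-postaltr10}; fact (ii), namely $\bullet_\beta=\beta\circ\bullet$, is precisely what makes those terms collapse to $\beta^2$ of their untwisted counterparts, so that all ten verifications reduce to the two sample computations displayed above.
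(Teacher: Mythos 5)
The paper states this theorem without any proof, so there is nothing to compare against; your argument is correct and is exactly the standard Yau-twisting computation one would expect, in the spirit of Theorem~\ref{TwistYauHomAltAlg}. The two key reductions — that $\bullet_\beta=\beta\circ\bullet$ and that each twisted monomial equals $\beta^2$ of its untwisted counterpart (with signs unchanged since $\beta$ is even) — are precisely what is needed, and your preliminary fact (i) also supplies the multiplicativity of $\beta\alpha$ that the paper assumes throughout.
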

\subsection{Rota-Baxter operators on pseudo-Euclidean Hom-alternative superalgebras}
\begin{defn}
Let $(\mathcal{A},\cdot,\alpha)$ be a Hom-alternative superalgebra and let $\lambda \in \mathbb{K}$. If an even linear map $\mathcal{R}:\mathcal{A}\rightarrow \mathcal{A}$ commutes with $\alpha$ and satisfies for all $x,y\in \mathcal{H}(\mathcal{A})$, 
\begin{eqnarray*}
R(x)\cdot R(y)=R(R(x)\cdot y+x\cdot R(y)+\lambda x\cdot y),
\end{eqnarray*}
then it is called a Rota-Baxter operator of weight $\lambda$ and $(\mathcal{A},\cdot,\alpha, R)$ is called a Rota-Baxter Hom-alternative superalgebra of weight $\lambda$.
\end{defn}

\begin{prop}
Let $(\mathcal{A},\cdot,\alpha,R)$ be a Rota-Baxter   Hom-alternative superalgebra of weight $\lambda$. Define a  multiplication on $\mathcal{A}$ by $x\cdot_{\alpha^{n}} y=\alpha^n(x)\cdot\alpha^n(y)$, for all $x,y\in\mathcal{H}(\mathcal{A})$.
Then $(\mathcal{A},\cdot_{\alpha^{n}},\alpha^{n+1},R)$ is a Rota-Baxter Hom-alternative superalgebra of weight $\lambda$.
\end{prop}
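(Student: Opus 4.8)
The plan is to recognize $(\mathcal{A},\cdot_{\alpha^n},\alpha^{n+1})$ as an instance of the Yau-type twist from Theorem \ref{TwistYauHomAltAlg}, and then to check the Rota--Baxter axiom by a short direct computation using only that $R$ commutes with $\alpha$ and that $(\mathcal{A},\cdot,\alpha)$ is multiplicative.

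First I would note that $\beta:=\alpha^n$ is a morphism of the Hom-alternative superalgebra $(\mathcal{A},\cdot,\alpha)$: by multiplicativity $\alpha\circ\cdot=\cdot\circ\alpha^{\otimes 2}$, and trivially $\alpha\circ\alpha=\alpha\circ\alpha$, so $\alpha$, and hence every power $\alpha^n$, is such a morphism. Theorem \ref{TwistYauHomAltAlg} applied with $\beta=\alpha^n$ then yields that $(\mathcal{A},\cdot_\beta=\beta\cdot,\beta\alpha)$ is a Hom-alternative superalgebra; since by multiplicativity $x\cdot_\beta y=\alpha^n(x\cdot y)=\alpha^n(x)\cdot\alpha^n(y)=x\cdot_{\alpha^n}y$ and $\beta\alpha=\alpha^{n+1}$, this is exactly $(\mathcal{A},\cdot_{\alpha^n},\alpha^{n+1})$. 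Multiplicativity of the twist is inherited, since $\alpha^{n+1}(x\cdot_{\alpha^n}y)=\alpha^{n+1}(\alpha^n(x)\cdot\alpha^n(y))=\alpha^{n+1}(x)\cdot_{\alpha^n}\alpha^{n+1}(y)$ again follows from multiplicativity of $\alpha$.

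Next, $R\alpha=\alpha R$ gives $R\alpha^{n+1}=\alpha^{n+1}R$, so $R$ commutes with the new twisting map. For the weight-$\lambda$ identity, for $x,y\in\mathcal{H}(\mathcal{A})$ I would compute
\begin{align*}
R(x)\cdot_{\alpha^n}R(y)
&=\alpha^n(R(x))\cdot\alpha^n(R(y))=R(\alpha^n(x))\cdot R(\alpha^n(y))\\
&=R\big(R(\alpha^n(x))\cdot\alpha^n(y)+\alpha^n(x)\cdot R(\alpha^n(y))+\lambda\,\alpha^n(x)\cdot\alpha^n(y)\big)\\
&=R\big(\alpha^n(R(x))\cdot\alpha^n(y)+\alpha^n(x)\cdot\alpha^n(R(y))+\lambda\,\alpha^n(x)\cdot\alpha^n(y)\big)\\
&=R\big(R(x)\cdot_{\alpha^n}y+x\cdot_{\alpha^n}R(y)+\lambda\,x\cdot_{\alpha^n}y\big),
\end{align*}
using $R\alpha=\alpha R$ in the second and fourth equalities and the Rota--Baxter identity of weight $\lambda$ for $\cdot$ in the third. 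This establishes the claim.

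The computation is essentially routine bookkeeping; the only point worth care is that the weight is genuinely preserved (the scalar $\lambda$ is not rescaled), which holds because $\alpha^n$ is applied uniformly to both arguments of the product and commutes with $R$. I do not anticipate any real obstacle.
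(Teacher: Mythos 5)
Your proof is correct and follows essentially the same route as the paper: the paper also first invokes the Yau-type twist (via a corollary of Theorem \ref{TwistYauHomAltAlg}) to get that $(\mathcal{A},\cdot_{\alpha^{n}},\alpha^{n+1})$ is a Hom-alternative superalgebra, and then verifies the weight-$\lambda$ Rota--Baxter identity by exactly the computation you give, using $R\alpha=\alpha R$ and multiplicativity. Your explicit justification that $\alpha^{n}$ is a morphism so that the twist applies is a small added care not spelled out in the paper, but it is the same argument.
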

\begin{proof}
According to Corollary \ref{twist}, we have $(\mathcal{A},\cdot_{\alpha^{n}},\alpha^{n+1})$ is a  Hom-alternative superalgebra. For $x,y \in\mathcal{H}(\mathcal{A})$,
\begin{align*}
R(x)\cdot_{\alpha^{n}} R(y)
=&\alpha^{n+1}(R(x))\cdot\alpha^{n+1}(R(y))\\
=&R(\alpha^{n+1}(x))\cdot R(\alpha^{n+1}(y))\\
=&R(R(\alpha^{n+1}(x))\cdot\alpha^{n+1}(y)+\alpha^{n+1}(x)\cdot R(\alpha^{n+1}(y))+\lambda \alpha^{n+1}(x)\cdot\alpha^{n+1}(y)\\
=&R(\alpha^{n+1}(R(x))\cdot\alpha^{n+1}(y)+\alpha^{n+1}(x)\cdot\alpha^{n+1}(R(y))+\lambda \alpha^{n+1}(x)\cdot\alpha^{n+1}(y)\\
=&R(R(x)\cdot_{\alpha^{n}} y+ x\cdot_{\alpha^{n}} R(y)+\lambda x\cdot_{\alpha^{n}} y).
\end{align*}
Then $(\mathcal{A},\cdot_{\alpha^{n}},\alpha^{n+1}, R)$ is a Rota-Baxter Hom-alternative superalgebra of weight $\lambda$.
\end{proof}

\begin{prop}\label{HomAltToHomPstAlt}
 Let $(\mathcal{A},\cdot, \alpha)$ be a Hom-alternative superalgebra and $R: \mathcal{A}\rightarrow \mathcal{A}$ a Rota-Baxter operator of weight $\lambda$ on $\mathcal{A}$. Then
$(\mathcal{A}, \prec, \succ,\circ ,\alpha)$ is a Hom-post-alternative superalgebra, where for any $x, y \in \mathcal{H}(\mathcal{A})$, 
$$x\prec y=x\cdot R(y)\quad\mbox{,}\quad x\succ y=R(x)\cdot y,\quad x\circ y=\lambda x\cdot y.$$
\end{prop}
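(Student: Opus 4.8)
The plan is to verify the ten defining identities \eqref{Hom-postaltr1}--\eqref{Hom-postaltr10} of Definition \ref{post-hom} for the tuple $(\mathcal{A},\prec,\succ,\circ,\alpha)$ one at a time, writing throughout $x\bullet y=x\prec y+x\succ y+x\circ y=x\cdot R(y)+R(x)\cdot y+\lambda\,x\cdot y$ for the associated total product. The single fact that drives the whole computation is that the Rota-Baxter relation of weight $\lambda$ can be read as
\[
R(x)\cdot R(y)=R\bigl(R(x)\cdot y+x\cdot R(y)+\lambda\,x\cdot y\bigr)=R(x\bullet y),\qquad x,y\in\mathcal{H}(\mathcal{A}).
\]
Together with the compatibility $R\alpha=\alpha R$ built into the notion of a Rota-Baxter operator (and the standing multiplicativity of $\alpha$), this lets one rewrite every monomial occurring in \eqref{Hom-postaltr1}--\eqref{Hom-postaltr10} as a monomial in the single product $\cdot$ whose three entries lie in $\{x,y,z,R(x),R(y),R(z)\}$, where $|R(w)|=|w|$ since $R$ is even.

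The execution then goes as follows. After substituting the definitions of $\prec,\succ,\circ$ and $\bullet$, pulling out the scalars $\lambda$ that come from the operation $\circ$, commuting $R$ past $\alpha$, and---in \eqref{Hom-postaltr7}--\eqref{Hom-postaltr10}, the identities containing a $\bullet$-term---replacing $R(x\bullet y)$ by $R(x)\cdot R(y)$, the left-hand side of each identity collapses to $\lambda^{k}$, with $k\in\{0,1,2\}$, times an expression of one of the shapes
\[
as_{\mathcal{A}}(a,b,c)+(-1)^{|a||b|}as_{\mathcal{A}}(b,a,c)\qquad\text{or}\qquad as_{\mathcal{A}}(a,b,c)+(-1)^{|b||c|}as_{\mathcal{A}}(a,c,b)
\]
for suitable $a,b,c$ among those six elements; the first shape vanishes by the left Hom-alternative identity \eqref{LeftHomAlt} and the second by the right Hom-alternative identity \eqref{RightHomAlt} (the consequences \eqref{HomFlixible} and \eqref{Alter2} may be used where they shorten the bookkeeping). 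For instance, \eqref{Hom-postaltr1} becomes $\lambda^{2}\bigl(as_{\mathcal{A}}(x,y,z)+(-1)^{|x||y|}as_{\mathcal{A}}(y,x,z)\bigr)$, \eqref{Hom-postaltr2} becomes $\lambda^{2}\bigl(as_{\mathcal{A}}(x,y,z)+(-1)^{|y||z|}as_{\mathcal{A}}(x,z,y)\bigr)$, \eqref{Hom-postaltr3} becomes $\lambda\bigl(as_{\mathcal{A}}(x,y,R(z))+(-1)^{|x||y|}as_{\mathcal{A}}(y,x,R(z))\bigr)$, \eqref{Hom-postaltr4} becomes $\lambda\bigl(as_{\mathcal{A}}(R(x),y,z)+(-1)^{|y||z|}as_{\mathcal{A}}(R(x),z,y)\bigr)$, and \eqref{Hom-postaltr9} becomes $as_{\mathcal{A}}(R(x),R(y),z)+(-1)^{|x||y|}as_{\mathcal{A}}(R(y),R(x),z)$; the remaining identities \eqref{Hom-postaltr5}--\eqref{Hom-postaltr8} and \eqref{Hom-postaltr10} follow the same pattern, landing on $as_{\mathcal{A}}$ evaluated on suitable permutations of triples drawn from $\{x,y,z,R(x),R(y),R(z)\}$ (e.g. $(z,R(x),y)$, $(R(x),y,R(z))$, $(z,R(x),R(y))$).

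I expect no genuine conceptual difficulty beyond the repackaging of the Rota-Baxter relation displayed above; the only thing that demands care---and what I anticipate to be the bulk of the work---is the bookkeeping of ten multi-term super-identities: one must check that every $\mathbb{Z}_2$-sign matches after the substitutions and identify, for each identity, the correct permutation of the correct triple to insert into \eqref{LeftHomAlt} or \eqref{RightHomAlt}. I would organize this by first recording, as a short preliminary lemma, the few ``reduction formulas'' that express $u\prec v$, $u\succ v$, $u\circ v$ and $u\bullet v$ under each outer operation in terms of $\cdot$, $R$ and $\alpha$, and then applying them mechanically. It is also convenient to dispose of the weight-zero case $\lambda=0$ at the outset: then $\circ\equiv 0$ and $\bullet=\prec+\succ$, so \eqref{Hom-postaltr1}--\eqref{Hom-postaltr6} hold trivially while \eqref{Hom-postaltr7}--\eqref{Hom-postaltr10} reduce to the Hom-pre-alternative axioms \eqref{Hom-preAlt1}--\eqref{Hom-preAlt4}.
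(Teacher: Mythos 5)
Your proposal takes essentially the same route as the paper's proof: identities \eqref{Hom-postaltr1}--\eqref{Hom-postaltr6} are dispatched as instances of the Hom-alternative super-identities applied to triples drawn from $\{x,y,z,R(x),R(y),R(z)\}$ (the paper simply declares them to hold ``obviously''), while \eqref{Hom-postaltr7}--\eqref{Hom-postaltr10} are verified exactly as you describe, by expanding $\bullet$ and collapsing $R(x\bullet y)$ into $R(x)\cdot R(y)$ via the weight-$\lambda$ Rota--Baxter relation and $R\alpha=\alpha R$. One caveat, shared with the paper (which skips the computation): with the signs as printed in \eqref{Hom-postaltr5} the reduction leaves a residual term $\bigl(1-(-1)^{|x||y|}\bigr)\bigl(\alpha(R(y))\cdot(x\cdot z)-\alpha(x)\cdot(R(y)\cdot z)\bigr)$, so that identity only matches \eqref{LeftHomAlt} for the triple $(R(y),x,z)$ if the factors $(-1)^{|x||y|}$ are attached to the second and third terms rather than to the third and fourth.
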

\begin{proof}
Since $\mathcal{A}$ is a Hom-alternative superalgebra, \eqref{Hom-postaltr1}, \eqref{Hom-postaltr2}, \eqref{Hom-postaltr4}, \eqref{Hom-postaltr5} and \eqref{Hom-postaltr6} obviously hold. For any $x, y, z\in \mathcal{H}(\mathcal{A})$, 
{\small\begin{align*}
&(x\succ y) \prec \alpha(z) - \alpha(x)\succ(y \prec z) + (-1)^{|x||y|}(y \prec x) \prec \alpha(z) - (-1)^{|x||y|}\alpha(y) \prec (x \bullet z)\\
=&(R(x)\cdot y)\cdot R(\alpha(z)) - R(\alpha(x))\cdot(y \cdot R(z)) + (-1)^{|x||y|}(y \cdot R(x)) \cdot R(\alpha(z)) - (-1)^{|x||y|}\alpha(y) \cdot R(x \bullet z)\\
=&(R(x)\cdot y)\cdot\alpha(R(z)) - \alpha (R(x))\cdot(y \cdot R(z)) + (-1)^{|x||y|}(y \cdot R(x)) \cdot \alpha (R(z)) - (-1)^{|x||y|}\alpha(y) \cdot R(x \bullet z)\\
=&(R(x)\cdot y)\cdot\alpha(R(z)) - \alpha (R(x))\cdot(y \cdot R(z)) + (-1)^{|x||y|}(y \cdot R(x)) \cdot \alpha (R(z)) \\ 
& \hspace{5cm}- (-1)^{|x||y|}\alpha(y) \cdot R( x\prec z+ x\succ z+ x\circ z)\\
=&(R(x)\cdot y)\cdot\alpha(R(z)) - \alpha (R(x))\cdot(y \cdot R(z)) + (-1)^{|x||y|}(y \cdot R(x)) \cdot \alpha (R(z)) \\ 
& \hspace{5cm} - (-1)^{|x||y|}\alpha(y) \cdot R( x\cdot R(z)+ R(x)\cdot z+ \lambda x\cdot z)\\
=&(R(x)\cdot y)\cdot\alpha(R(z)) - \alpha (R(x))\cdot(y \cdot R(z)) + (-1)^{|x||y|}(y \cdot R(x)) \cdot \alpha (R(z)) - (-1)^{|x||y|}\alpha(y) \cdot (  R(x)\cdot R(z))\\
=&0.
\end{align*}}
So, \eqref{Hom-postaltr7} holds. Moreover, \eqref{Hom-postaltr8} holds. Indeed,
{\small\begin{align*}
&(x\succ y)\prec \alpha(z) - \alpha(x)\succ(y \prec z) +(-1)^{|y||z|} (x \bullet z)\succ\alpha(y) -(-1)^{|y||z|}\alpha(x)\succ(z\succ y)\\
=&(R(x)\cdot y)\cdot R(\alpha(z)) - R(\alpha(x))\cdot(y \cdot R(z)) + (-1)^{|y||z|}R(x\bullet z)\cdot \alpha(y) - (-1)^{|y||z|}R(\alpha(x)) \cdot (R(z) \cdot y)\\
=&(R(x)\cdot y)\cdot R(\alpha(z)) - R(\alpha(x))\cdot(y \cdot R(z)) + (-1)^{|y||z|}R( x\prec z+ x\succ z+ x\circ z)\cdot \alpha(y)\\
& \hspace{5cm}- (-1)^{|y||z|}\alpha(R(x)) \cdot (R(z) \cdot y)\\
=&(R(x)\cdot y)\cdot\alpha(R(z)) - \alpha (R(x))\cdot(y \cdot R(z)) + (-1)^{|y||z|}R( x\cdot R(z)+ R(x)\cdot z+ \lambda x\cdot z)\cdot \alpha(y)\\
& \hspace{5cm}- (-1)^{|y||z|}\alpha(R(x)) \cdot (R(z) \cdot y)\\
=&(R(x)\cdot y)\cdot\alpha(R(z)) - \alpha (R(x))\cdot(y \cdot R(z))+(-1)^{|y||z|}(R(x)\cdot R(z))\cdot \alpha(y)- (-1)^{|y||z|}\alpha(R(x)) \cdot (R(z) \cdot y)\\
=&0.
\end{align*}}
Now, we prove the next identity \eqref{Hom-postaltr9}, we have
{\small\begin{align*}
&(x\bullet y) \succ \alpha(z) -\alpha(x)\succ(y \succ z) + (-1)^{|x||y|}(y \bullet x) \succ \alpha(z) - (-1)^{|x||y|}\alpha(y) \succ (x \succ z) \\
=&R(x\bullet y)\cdot \alpha(z) - R(\alpha(x))\cdot(R(y) \cdot z) + (-1)^{|x||y|}R(y\bullet x)\cdot \alpha(z) - (-1)^{|x||y|}R(\alpha(y)) \cdot (R(x) \cdot z)\\
=&R(x\prec y+x\succ y+x\circ y)\cdot \alpha(z) - R(\alpha(x))\cdot(R(y) \cdot z) + (-1)^{|x||y|}R(y\prec x+y\succ x+y\circ x)\cdot \alpha(z)\\
& \hspace{5cm} - (-1)^{|x||y|}R(\alpha(y)) \cdot (R(x) \cdot z)\\
=&R(x\cdot R(y)+R(x)\cdot y+\lambda x\cdot y)\cdot \alpha(z) - \alpha(R(x))\cdot(R(y) \cdot z) + (-1)^{|x||y|}R(y\cdot R(x)+R(y)\cdot x+\lambda y\cdot x)\cdot \alpha(z)\\
& \hspace{5cm} - (-1)^{|x||y|}\alpha(R(y) \cdot (R(x) \cdot z)\\
=&(R(x)\cdot R(y))\cdot \alpha(z) - \alpha(R(x))\cdot(R(y) \cdot z) + (-1)^{|x||y|}(R(y)\cdot R(x))\cdot \alpha(z) - (-1)^{|x||y|}\alpha(R(y) \cdot (R(x) \cdot z)\\
=&0.
\end{align*}}
The last identity \eqref{Hom-postaltr10} holds. By the same way, we show that
{\small\begin{align*}
&(z\prec x)\prec \alpha(y) - \alpha(z)\prec(x \bullet y) +(-1)^{|x||y|} (z \prec y)\prec \alpha(x) - (-1)^{|x||y|}\alpha(z)\prec(y\bullet x) \\
=&(z\cdot R(x))\cdot R(\alpha(y)) - \alpha(z)\cdot R(x \bullet y) +(-1)^{|x||y|} (z \cdot R(y))\cdot R(\alpha(x)) - (-1)^{|x||y|}\alpha(z)\cdot R(y\bullet x) \\
=&(z\cdot R(x))\cdot \alpha(R(y)) - \alpha(z)\cdot R(x\prec y+x\succ y+x\circ y) +(-1)^{|x||y|} (z \cdot R(y))\cdot \alpha(R(x)) \\
& \hspace{5cm} - (-1)^{|x||y|}\alpha(z)\cdot R(y\prec x+y\succ x+y\circ x)\\
=&(z\cdot R(x))\cdot \alpha(R(y)) - \alpha(z)\cdot R(x\cdot R(y)+R(x)\cdot y+\lambda x\cdot y) +(-1)^{|x||y|} (z \cdot R(y))\cdot \alpha(R(x)) \\
& \hspace{5cm} - (-1)^{|x||y|}\alpha(z)\cdot R(y\cdot R(x)+R(y)\cdot x+\lambda y\cdot x)\\
=&(z\cdot R(x))\cdot \alpha(R(y)) - \alpha(z)\cdot (R(x)\cdot R(y)) +(-1)^{|x||y|} (z \cdot R(y))\cdot \alpha(R(x))- (-1)^{|x||y|}\alpha(z)\cdot (R(y)\cdot R(x))\\
=&0.
\end{align*}}
Therefore $(\mathcal{A}, \prec, \succ,\circ ,\alpha)$ is a Hom-post-alternative superalgebra.
\end{proof}
Combining Theorem \ref{HomPstAltToHomAlt} and Proposition \ref{HomAltToHomPstAlt}, one obtains the following construction. 
\begin{cor}
Let $(\mathcal{A},\cdot,\alpha,R)$ be a Rota-Baxter Hom-alternative superalgebra of weight $\lambda$ and $R$ an even linear map commuting with $\alpha$.
Then $(\mathcal{A},\circ,\alpha,R)$ is a Rota-Baxter Hom-alternative superalgebra, where the multiplication $\circ$ is defined for all $x,y\in\mathcal{H}(\mathcal{A})$ as
\begin{equation}
x\circ y=R(x)\cdot y+x\cdot R(y)+\lambda x\cdot y.
\end{equation}
\end{cor}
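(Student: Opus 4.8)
The plan is to deduce the corollary directly from Proposition \ref{HomAltToHomPstAlt} and Theorem \ref{HomPstAltToHomAlt}, and then to verify one extra identity. First I would apply Proposition \ref{HomAltToHomPstAlt}: since $R$ is a Rota-Baxter operator of weight $\lambda$ on the Hom-alternative superalgebra $(\mathcal{A},\cdot,\alpha)$, the tuple $(\mathcal{A},\prec,\succ,\circ_0,\alpha)$ with $x\prec y=x\cdot R(y)$, $x\succ y=R(x)\cdot y$ and $x\circ_0 y=\lambda\,x\cdot y$ is a Hom-post-alternative superalgebra. Applying Theorem \ref{HomPstAltToHomAlt} to it, the operation $x\bullet y=x\prec y+x\succ y+x\circ_0 y=R(x)\cdot y+x\cdot R(y)+\lambda\,x\cdot y$ makes $(\mathcal{A},\bullet,\alpha)$ a Hom-alternative superalgebra. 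Since $\bullet$ coincides with the operation $\circ$ of the statement, $(\mathcal{A},\circ,\alpha)$ is a Hom-alternative superalgebra.

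It then remains to show that $R$ is still a Rota-Baxter operator of weight $\lambda$ for $\circ$. That $R$ commutes with $\alpha$ is part of the hypotheses, and multiplicativity of $\alpha$ with respect to $\circ$ follows at once from $R\alpha=\alpha R$ and the multiplicativity of $\alpha$ for $\cdot$. For the Rota-Baxter relation, I would expand $R(x)\circ R(y)$ from the definition of $\circ$, obtaining the three products $R^2(x)\cdot R(y)$, $R(x)\cdot R^2(y)$ and $\lambda\,R(x)\cdot R(y)$, and apply the weight-$\lambda$ relation for $\cdot$ to each of them (the first two with one argument already lying in the image of $R$). Collecting the outcome under a single $R$ gives
\[
R(x)\circ R(y)=R\big(R^2(x)\cdot y+x\cdot R^2(y)+2\,R(x)\cdot R(y)+2\lambda\,R(x)\cdot y+2\lambda\,x\cdot R(y)+\lambda^2\,x\cdot y\big).
\]
On the other hand, expanding $R(x)\circ y+x\circ R(y)+\lambda\,x\circ y$ via the definition of $\circ$ and collecting the nine resulting $\cdot$-monomials produces exactly the argument of $R$ above; hence $R(x)\circ R(y)=R\big(R(x)\circ y+x\circ R(y)+\lambda\,x\circ y\big)$, as required. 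No Koszul signs intervene here, since $R$ is even and the Rota-Baxter relation carries no signs.

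The only real work is the monomial bookkeeping in the second step — matching two sums of about nine terms in $x,y,R,R^2,\cdot$ and $\lambda$ — which is routine but the place where a coefficient is most easily mislaid; everything else is a straight invocation of the two cited results. One could also shortcut the second step by observing that the identity $R(x)\circ R(y)=R\big(R(x)\circ y+x\circ R(y)+\lambda\,x\circ y\big)$ does not use alternativity of $\cdot$ at all: it is the classical statement that the weight-$\lambda$ Rota-Baxter descendant of a weight-$\lambda$ Rota-Baxter (super)algebra is again a weight-$\lambda$ Rota-Baxter algebra, which one may simply cite and combine with the Hom-alternative structure produced above.
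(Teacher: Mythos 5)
Your argument matches the paper's: the corollary is presented there as an immediate combination of Proposition \ref{HomAltToHomPstAlt} and Theorem \ref{HomPstAltToHomAlt}, exactly as in your first paragraph, and your identification of the sum $\prec+\succ+\lambda\,\cdot$ with the product $\circ$ of the statement is correct. Your second step --- expanding $R(x)\circ R(y)$ and $R(x)\circ y+x\circ R(y)+\lambda\,x\circ y$ and matching the resulting monomials, which indeed uses no alternativity and carries no Koszul signs since $R$ is even --- is also correct, and in fact supplies the verification that $R$ remains a weight-$\lambda$ Rota--Baxter operator for $\circ$, a point the paper leaves entirely unproved since it gives no argument for the corollary beyond the one-line remark that it follows by combining the two cited results.
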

 \begin{defn}
 Let $(\mathcal{A}, \cdot, \alpha,\Psi)$ be a $\mathsf{PEHomAlt}$ superalgebra and $R$ be a Rota-Baxter operators of weight $\lambda$. The tuple $(\mathcal{A}, \cdot, \alpha,\Psi,R)$ is called a pseudo-Euclidean Rota-Baxter   Hom-alternative superalgebra of weight $\lambda$ if following compatibility condition holds:
\begin{equation}
    \label{ComRB}
\Psi(Rx,y)+\Psi(x,Ry)+\lambda\Psi(x,y)=0.\end{equation}
 \end{defn}
 \begin{prop}
 Let $(\mathcal{A}, \cdot, \alpha,\Psi,R)$ be  a $\alpha$-pseudo-Euclidean Rota-Baxter   Hom-alternative superalgebra of weight $0$ where $R$ is invertible. Then,  $(\mathcal{A}, \cdot, \alpha,\Psi_R)$ is a symplectic Hom-alternative superalgebra, where 
 for all $x,y \in \mathcal{H}(\mathcal{A})$,  
 \begin{equation}
     \label{SymRB}\Psi_R(x,y)=\Psi(R^{-1}(x),y)). 
 \end{equation}
 \end{prop}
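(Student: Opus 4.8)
The plan is to transport the Rota-Baxter compatibility and the $\alpha$-invariance of $\Psi$ through the bijection $R$ in order to verify the three defining properties of a symplectic form on $(\mathcal{A},\cdot,\alpha)$: super-skew-symmetry, nondegeneracy, and the closedness identity \eqref{symp}. Nondegeneracy of $\Psi_R$ is immediate: since $R$ is invertible and $\Psi$ is nondegenerate, $\Psi_R(x,y)=\Psi(R^{-1}(x),y)=0$ for all $y$ forces $R^{-1}(x)=0$, hence $x=0$. Evenness of $\Psi_R$ follows because $R$ (and hence $R^{-1}$) is even and $\Psi$ is even.

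First I would establish super-skew-symmetry. Using \eqref{ComRB} with $\lambda=0$, namely $\Psi(Rx,y)=-\Psi(x,Ry)$, one gets $\Psi(R^{-1}u,v)=-\Psi(u,R^{-1}v)$ after substituting $x=R^{-1}u$, $y=R^{-1}v$ (and using that $R$ commutes with nothing is needed here, only bijectivity). Then
\[
\Psi_R(x,y)=\Psi(R^{-1}x,y)=-\Psi(x,R^{-1}y)=-(-1)^{|x||y|}\Psi(R^{-1}y,x)=-(-1)^{|x||y|}\Psi_R(y,x),
\]
where the middle equality is the twisted-adjoint relation just derived and the next uses supersymmetry of $\Psi$. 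So $\Psi_R$ is super-skew-symmetric.

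Next I would verify the closedness identity \eqref{symp} for $\omega=\Psi_R$. The key computation rewrites each term $\Psi_R(\alpha(x),y\cdot z)=\Psi(R^{-1}\alpha(x),y\cdot z)$. Since $R$ commutes with $\alpha$, so does $R^{-1}$, giving $\Psi(\alpha(R^{-1}x),y\cdot z)$; by the $\alpha$-invariance \eqref{AlphaSymm} of $\Psi$ this equals $\Psi(R^{-1}x,\alpha^{-1}(y\cdot z))=\Psi(R^{-1}x,\alpha^{-1}(y)\cdot\alpha^{-1}(z))$ using multiplicativity (recall $\alpha$ is bijective on a regular algebra). Actually the cleaner route is to use invariance of $\Psi$ itself: the sum in \eqref{symp} becomes, after pulling $R^{-1}$ inside and moving a factor across the product via \eqref{InvCondition}, a cyclic-type sum in $\Psi$ that telescopes to zero because $\Psi$ is invariant and supersymmetric. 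Concretely I expect the identity to reduce to
\[
(-1)^{|x||z|}\Psi(R^{-1}x,y\cdot z)+(-1)^{|y||x|}\Psi(R^{-1}y,z\cdot x)+(-1)^{|z||y|}\Psi(R^{-1}z,x\cdot y)=0,
\]
and then, writing $u=R^{-1}x$, $v=R^{-1}y$, $w=R^{-1}z$ and using invariance to shift the action ($\Psi(u,y\cdot z)=\Psi(u\cdot y,z)$, etc.) together with supersymmetry, the three terms cancel in pairs. One must keep careful track of the Koszul signs; I would organize the bookkeeping exactly as in the proof of Lemma \ref{HomPEtoSymp}, which handles an essentially identical cyclic cancellation.

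The main obstacle is the sign management in the closedness identity: the three summands each carry a Koszul sign $(-1)^{|x||z|}$, $(-1)^{|y||x|}$, $(-1)^{|z||y|}$, and after applying invariance of $\Psi$ to move the product across the bilinear form one introduces further signs of the form $(-1)^{|u|(|y|+|z|)}$; reconciling these so that the sum visibly vanishes requires patience but no new idea. Everything else (nondegeneracy, evenness, skew-symmetry) is short, and the underlying Hom-alternative structure on $(\mathcal{A},\cdot,\alpha)$ is unchanged, so no further algebraic identity beyond what is recorded in Section \ref{PEHomAlt} is needed.
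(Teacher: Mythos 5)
Your verification of nondegeneracy and of super-skew-symmetry is correct (indeed cleaner than the paper's own, whose displayed computation of the antisymmetry of $R^{-1}$ drops a sign), but the closedness argument has a genuine gap: you never invoke the weight-zero Rota--Baxter identity $R(x)\cdot R(y)=R\bigl(R(x)\cdot y+x\cdot R(y)\bigr)$ itself, and without it the conclusion fails. The condition \eqref{symp} for $\omega=\Psi_R$ does not reduce to the cyclic sum
$(-1)^{|x||z|}\Psi(R^{-1}x,y\cdot z)+(-1)^{|y||x|}\Psi(R^{-1}y,z\cdot x)+(-1)^{|z||y|}\Psi(R^{-1}z,x\cdot y)=0$,
and that sum does not ``telescope to zero'' from invariance and supersymmetry alone: already for $\mathcal{A}=\mathbb{K}$ with ordinary multiplication, $\alpha=\mathrm{id}$, $\Psi(a,b)=ab$ and any invertible scalar $R$, each of the three terms equals $R^{-1}\neq 0$ and they add up rather than cancel (and three terms cannot ``cancel in pairs'' in any case). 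Invariance and supersymmetry only cycle the arguments around; they produce no minus signs that could kill the sum.

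The missing idea --- and the heart of the paper's proof --- is that the weight-zero Rota--Baxter identity makes $R^{-1}$ an even superderivation of $(\mathcal{A},\cdot)$: writing $x\cdot y=R(R^{-1}x)\cdot R(R^{-1}y)=R\bigl(x\cdot R^{-1}(y)+R^{-1}(x)\cdot y\bigr)$ and applying $R^{-1}$ gives $R^{-1}(x\cdot y)=R^{-1}(x)\cdot y+x\cdot R^{-1}(y)$. Combined with the antisymmetry $\Psi(R^{-1}x,y)=-\Psi(x,R^{-1}y)$ that you did correctly derive from \eqref{ComRB}, and with $R^{-1}\alpha=\alpha R^{-1}$, this places you exactly in the hypotheses of Lemma \ref{HomPEtoSymp} with $\mathfrak{D}=R^{-1}$; that lemma's proof uses the $\alpha$-invariance of $\Psi$ to rewrite two of the three terms and then, crucially, the Leibniz rule $\mathfrak{D}(y\cdot z)=\mathfrak{D}(y)\cdot z+y\cdot\mathfrak{D}(z)$ to make them collapse against the first. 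You cite the right lemma for the ``bookkeeping,'' but its superderivation hypothesis is the substantive input you still owe, and it is precisely where the Rota--Baxter equation enters.
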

 \begin{proof} Since $R$ is invertible and $\Psi$ is non-degenerate and supersymmetric bilinear form,  it is easy to check that $\Psi_R$ is a non-degenerate super-skew-symmetric bilinear form. For $x, y\in \mathcal{H}(\mathcal{A})$, we have \begin{align*}
     R^{-1}(x\cdot y)&= R^{-1}(R(R^{-1}(x))\cdot R(R^{-1}(y))\\&=R^{-1}(R(R(R^{-1}(x))\cdot R^{-1}(y)+R^{-1}(x)\cdot R(R^{-1}(y)))\\&=R(R^{-1}(x))\cdot R^{-1}(y)+R^{-1}(x)\cdot R(R^{-1}(y))\\&=R^{-1}(x)\cdot y+x\cdot R^{-1}(y).
 \end{align*}
 Therfore, $R^{-1}$ is a derivation of degree $0$ on Hom-alternative superalgebra $(\mathcal{A}, \cdot, \alpha,\Psi,R)$. On the other hand, by \eqref{ComRB}, we have $$\Psi(R^{-1}x,y)=\Psi(R^{-1}x,RR^{-1}y)=\Psi(RR^{-1}x,R^{-1}y)=\Psi(x,R^{-1}y).$$  Thus, by  Lemma  
\ref{HomPEtoSymp}, $\Psi_R$ is a symplectic  form on the  Hom-alternative superalgebra $(\mathcal{A}, \cdot, \alpha)$.
 Then,  $(\mathcal{A}, \cdot, \alpha,\Psi\circ(R^{-1}\otimes id_\mathcal{A}))$ is a  symplectic Hom-alternative superalgebra.
\end{proof}
 
Let $(\mathcal{A}, \cdot, \alpha,\omega)$ be regular symplectic Hom-alternative superalgebra. Define the bilinear multiplications $\prec\mathcal{A}\times\mathcal{A}\rightarrow \mathcal{A}$ and 
$\succ: \mathcal{A}\times\mathcal{A}\rightarrow \mathcal{A}$ on $\mathcal{A}$, for all $x,y,z\in \mathcal{H}(\mathcal{A})$   by
\begin{align}
     &\omega(x\prec y, \alpha^2(z))=\omega(x, \alpha^{-1}(y)\cdot z),\label{symp1} \\& \omega(x\succ y, \alpha^2(z))=(-1)^{|x|(|y|+|z|)}\omega(y, z\cdot \alpha^{-1}(x)),\label{symp2}
\end{align}
which is due to the assumption of regularity (invertible $\alpha$) is equivalent to
\begin{align*}
     &\omega(x\prec \alpha(y), \alpha^2(z))=\omega(x, y\cdot z), \\& \omega(\alpha(x)\succ y, \alpha^2(z))=(-1)^{|x|(|y|+|z|)}\omega(y, z\cdot x).
\end{align*}

 \begin{lem}\label{BulletToCdot}
 For $x, y,z \in \mathcal{H}(\mathcal{A})$, 
\begin{align}
     \omega(x\bullet y, z)=\omega(x\cdot y, z).
\end{align}
 \end{lem}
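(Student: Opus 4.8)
The plan is to expand $\omega(x\bullet y,z)=\omega(x\prec y,z)+\omega(x\succ y,z)$ by means of the defining relations \eqref{symp1}--\eqref{symp2} and then to reorganize the outcome using the closedness identity \eqref{symp}. Since $\alpha$ is bijective (regularity), it suffices to prove the identity when $x=\alpha(u)$, $y=\alpha(v)$ and $z=\alpha^{2}(w)$ for arbitrary homogeneous $u,v,w$; this is precisely the shape in which the reformulated versions of \eqref{symp1}--\eqref{symp2} recalled just before the lemma apply most cleanly, giving
\[
\omega(\alpha(u)\prec\alpha(v),\alpha^{2}(w))=\omega(\alpha(u),v\cdot w),\qquad
\omega(\alpha(u)\succ\alpha(v),\alpha^{2}(w))=(-1)^{|u|(|v|+|w|)}\,\omega(\alpha(v),w\cdot u).
\]

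Next I would write down the closedness relation \eqref{symp} for the triple $(u,v,w)$, solve it for the term $\omega(\alpha(u),v\cdot w)$, and substitute the result into the sum of the two displayed expressions. The two contributions proportional to $\omega(\alpha(v),w\cdot u)$ then carry opposite signs and cancel, so that $\omega(x\bullet y,z)$ collapses to the single term $-(-1)^{|u||w|+|v||w|}\,\omega(\alpha(w),u\cdot v)$. Applying super-skew-symmetry of $\omega$ converts this into $\omega(u\cdot v,\alpha(w))$, and finally the compatibility $\omega(\alpha(a),\alpha(b))=\omega(a,b)$ together with the multiplicativity $\alpha(u)\cdot\alpha(v)=\alpha(u\cdot v)$ rewrites it as $\omega(\alpha(u)\cdot\alpha(v),\alpha^{2}(w))=\omega(x\cdot y,z)$, which is the assertion. (Combined with non-degeneracy of $\omega$ this also yields $x\bullet y=x\cdot y$, but that is not needed here.)

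The argument is purely computational: the only ingredients are \eqref{symp1}, \eqref{symp2}, the closedness condition \eqref{symp}, super-skew-symmetry, multiplicativity of $\alpha$, regularity, and the $\alpha$-invariance of $\omega$. There is no conceptual obstacle; the one thing to watch is that each cancellation be backed by an exact Koszul-sign match (of the type $(-1)^{|u||w|}(-1)^{|u|(|v|+|w|)}=(-1)^{|u||v|}$), so I would display the relevant terms side by side and combine them one pair at a time rather than in a single step.
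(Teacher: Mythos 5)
Your proposal is correct and takes essentially the same route as the paper's proof: both expand $\omega(x\bullet y,z)$ through the adjoint relations \eqref{symp1}--\eqref{symp2}, use the closedness identity \eqref{symp} to collapse the two resulting terms to $-(-1)^{|w|(|u|+|v|)}\omega(\alpha(w),u\cdot v)$, and finish with super-skew-symmetry together with the $\alpha$-invariance of $\omega$ and multiplicativity of $\alpha$. The only difference is cosmetic --- the paper inserts $\alpha^{-1}$'s directly into $x,y,z$ rather than substituting $x=\alpha(u)$, $y=\alpha(v)$, $z=\alpha^{2}(w)$ --- and the signs you record agree with the paper's.
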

 \begin{proof}
For any $x,y,z \in  \mathcal{H}(\mathcal{A})$, we have
\begin{align*}
\omega(x\bullet y,z)&=\omega(x\prec y, z)+\omega(x\succ y, z)\\&=\omega(x\prec \alpha(\alpha^{-1}(y)),\alpha^{2}(\alpha^{-2}(z)))+\omega(\alpha(\alpha^{-1}(x))\succ y,\alpha^{2}(\alpha^{-2}(z)))\\&=\omega(x,\alpha^{-1}(y)\cdot \alpha^{-2}(z))+(-1)^{|x|(|y|+|z|)}\omega(y,\alpha^{-2}(z)\cdot \alpha^{-1}(x))\\&=\omega(\alpha(x),y\cdot \alpha^{-1}(z))+(-1)^{|x|(|y|+|z|)}\omega(\alpha(y),\alpha^{-1}(z)\cdot x)\\&\stackrel{\eqref{symp}}{=} -(-1)^{|z|(|x|+|y|)}\omega(z,x\cdot y)=\omega(x\cdot y,z).
\qedhere 
\end{align*}
 \end{proof}
 \begin{thm}\label{comp}
Under the above notations, there
exists a compatible Hom-pre-alternative superalgebra structure $(\prec, \succ)$ on $\mathcal{A}$ with respect $\alpha$ given  in \eqref{symp1} and \eqref{symp2}.
 \end{thm}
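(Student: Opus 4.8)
The plan is to exhibit the structure explicitly through \eqref{symp1}--\eqref{symp2} and then to check, in order: (i) that $\prec$ and $\succ$ are well-defined even bilinear maps and that $\alpha$ is multiplicative for each of them; (ii) that the four Hom-pre-alternative super-identities \eqref{Hom-preAlt1}--\eqref{Hom-preAlt4} hold; and (iii) that the associated operation $\bullet$ coincides with $\cdot$, which is exactly the compatibility assertion. Throughout I would use the non-degeneracy of $\omega$, its $\alpha$-invariance $\omega(\alpha(x),\alpha(y))=\omega(x,y)$ (the analogue of \eqref{AlphaSymm}, already employed in the proof of Lemma \ref{BulletToCdot}), the multiplicativity of $\cdot$ and $\alpha$, and the equivalent forms $\omega(x\prec\alpha(y),\alpha^2(z))=\omega(x,y\cdot z)$ and $\omega(\alpha(x)\succ y,\alpha^2(z))=(-1)^{|x|(|y|+|z|)}\omega(y,z\cdot x)$ of the defining relations.

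For (i): since $\alpha$ is invertible, $z\mapsto\alpha^2(z)$ is a bijection of $\mathcal{A}$, so for fixed homogeneous $x,y$ the right-hand sides of \eqref{symp1} and \eqref{symp2} define linear functionals on all of $\mathcal{A}$, and non-degeneracy of $\omega$ then produces unique elements $x\prec y$ and $x\succ y$; their $\mathbb{Z}_2$-degrees equal $|x|+|y|$ because $\omega$ and $\cdot$ are even, and bilinearity is immediate. For $\alpha(x\prec y)=\alpha(x)\prec\alpha(y)$ I would pair both sides against $\omega(-,\alpha^2(w))$: the $\alpha$-invariance turns the left side into $\omega(x\prec y,\alpha(w))=\omega(x\prec\alpha(\alpha^{-1}(y)),\alpha^2(\alpha^{-1}(w)))=\omega(x,\alpha^{-1}(y)\cdot\alpha^{-1}(w))$, while the right side equals $\omega(\alpha(x),y\cdot w)=\omega(x,\alpha^{-1}(y)\cdot\alpha^{-1}(w))$; non-degeneracy concludes, and the $\succ$ case is identical.

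For (ii): I would fix homogeneous $x,y,z$ together with an arbitrary homogeneous $w$, pair the left-hand side of the identity in question with $\omega(-,\alpha^2(w))$, and reduce each summand by peeling off the outermost $\prec$ or $\succ$ with the equivalent defining relations, replacing every internal $\bullet$ by $\cdot$ via Lemma \ref{BulletToCdot}, and normalising all products to the form $\alpha^2(\,\cdot\,)$ using multiplicativity and $\alpha$-invariance. After this each term becomes $\omega$ of a fixed element against an expression of the shape $(a\cdot b)\cdot\alpha(c)$ or $\alpha(a)\cdot(b\cdot c)$ with $a,b,c$ among $\alpha^{-2}(x),\alpha^{-2}(y),\alpha^{-2}(z),\alpha^{-2}(w)$, so the combinations package into Hom-associators $as_{\mathcal{A}}$. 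I expect \eqref{Hom-preAlt3} to collapse, up to an overall sign, to $\omega\big(z,\,as_{\mathcal{A}}(\alpha^{-2}(w),\alpha^{-2}(x),\alpha^{-2}(y))+(-1)^{|x||y|}as_{\mathcal{A}}(\alpha^{-2}(w),\alpha^{-2}(y),\alpha^{-2}(x))\big)$, which vanishes by the right Hom-alternative identity \eqref{RightHomAlt}; symmetrically \eqref{Hom-preAlt4} vanishes by the left identity \eqref{LeftHomAlt}; and \eqref{Hom-preAlt1}, \eqref{Hom-preAlt2}, which mix $\prec$ and $\succ$, should reduce to two $as_{\mathcal{A}}$-terms that cancel once the cyclic identity \eqref{Alter2} is used to bring them to a common argument. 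In each case non-degeneracy of $\omega$ lifts the vanishing of all the pairings $\omega(-,\alpha^2(w))$ to the identity itself.

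Step (iii) is then immediate: Lemma \ref{BulletToCdot} gives $\omega(x\bullet y,z)=\omega(x\cdot y,z)$ for every homogeneous $z$, hence $x\bullet y=x\cdot y$ by non-degeneracy, and since $(\mathcal{A},\bullet,\alpha)$ is Hom-alternative by the Corollary following Theorem \ref{HomPstAltToHomAlt}, the pair $(\prec,\succ)$ is a compatible Hom-pre-alternative superalgebra structure on $(\mathcal{A},\cdot,\alpha)$. The main obstacle is not conceptual but computational: keeping the $\mathbb{Z}_2$-signs and the powers of $\alpha$ correct through the successive substitutions, most delicately in \eqref{Hom-preAlt1}--\eqref{Hom-preAlt2} where three of the four generators are permuted and \eqref{Alter2} must be invoked with exactly the right sign. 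Once every term has been rewritten through $as_{\mathcal{A}}$ evaluated on $\alpha^{-2}$-images, the cancellations are forced by the Hom-alternative super-identities and by Lemma \ref{BulletToCdot}, which itself rests on the closedness \eqref{symp} of $\omega$.
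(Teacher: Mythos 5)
Your proposal follows essentially the same route as the paper's proof: pair each Hom-pre-alternative identity against $\omega(-,\alpha^{2}(t))$, peel off $\prec$ and $\succ$ via \eqref{symp1}--\eqref{symp2}, replace $\bullet$ by $\cdot$ using Lemma \ref{BulletToCdot}, reduce every term to Hom-associators, and conclude by the Hom-alternative super-identities together with the non-degeneracy of $\omega$. Your attributions are correct --- the paper explicitly treats \eqref{Hom-preAlt1} and \eqref{Hom-preAlt2} via the cyclic identity \eqref{Alter2} and leaves the remaining two, which indeed collapse to \eqref{RightHomAlt} and \eqref{LeftHomAlt} respectively, as "similar" --- and your added checks of well-definedness, multiplicativity of $\alpha$ for $\prec,\succ$, and the compatibility $\bullet=\cdot$ are sound supplements that the paper leaves implicit.
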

 \begin{proof}
Let $x, y, z, t \in \mathcal{H}(\mathcal{A})$,  by  \eqref{symp1} and \eqref{symp2}, we have
\begin{align*}
& \omega(\mathfrak{ass}_m(x,y,z),\alpha^{2}(t))=\omega((x\succ y)\prec \alpha(z)-\alpha(x)\succ (y\prec z),\alpha^{2}(t))\\&=\omega((x\succ y)\prec \alpha(z),\alpha^{2}(t))-\omega(\alpha(x)\succ (y\prec z),\alpha^{2}(t))\\&=\omega(x\succ y,z\cdot t)-(-1)^{|x|(|y|+|z|+|t|)}\omega(y\prec z,t\cdot x)\\&=\omega(\alpha(\alpha^{-1}(x))\succ y,\alpha^{2}(\alpha^{-2}(z\cdot t))-(-1)^{|x|(|y|+|z|+|t|)}\omega(y\prec \alpha(\alpha^{-1}(z)),\alpha^{2}(\alpha^{-2}(t\cdot x))\\&=(-1)^{|x|(|y|+|z|+|t|)}\big(\omega(y,\alpha^{-2}(z\cdot t)\cdot \alpha^{-1}(x))-\omega(y,\alpha^{-1}(z)\cdot \alpha^{-2}(t\cdot x)\big)\\
&=(-1)^{|x|(|y|+|z|+|t|)}\big(\omega(\alpha^{-2}(\alpha^{2}(y)),\alpha^{-2}(z\cdot t)\cdot \alpha^{-2}(\alpha(x)))\\ 
&\hspace{4cm} -\omega(\alpha^{-2}(\alpha^{2}(y)),\alpha^{-2}(\alpha(z))\cdot \alpha^{-2}(t\cdot x))\big)\\
&=(-1)^{|x|(|y|+|z|+|t|)}\big(\omega(\alpha^{2}(y)),(z\cdot t)\cdot \alpha(x))-\omega(\alpha^{2}(y),\alpha(z)\cdot (t\cdot x)\big)\\
&=(-1)^{|x|(|y|+|z|+|t|)}\omega(\alpha^{2}(y),as(z,t,x)).
\end{align*}
On the other hand, by Lemma \ref{BulletToCdot} and   \eqref{Alter2}, \eqref{symp1} and \eqref{symp2}, we have
\begin{align*}
& (-1)^{|x||y|}\big(\omega(\mathfrak{ass}_r(y,x,z),\alpha^{2}(t))\\=&(-1)^{|x||y|}\omega((y\prec x)\prec \alpha(z)-\alpha(y)\prec (x\bullet z),\alpha^{2}(t)\big)\\=&(-1)^{|x||y|}\big(\omega((y\prec x)\prec \alpha(z),\alpha^{2}(t))-\omega(\alpha(y)\prec (y\bullet z),\alpha^{2}(t)\big)\\=&(-1)^{|x||y|}\big(\omega(y\prec x, z\cdot t)-\omega(\alpha(y)\prec \alpha(\alpha^{-1}(x\bullet z)),\alpha^{2}(t)\big)\\=&(-1)^{|x||y|}\big(\omega(y\prec \alpha(\alpha^{-1}(x)), \alpha^{2}(\alpha^{-2}(z\cdot t)))-\omega(\alpha(y), \alpha^{-1}(x\bullet z)\cdot t\big)\\=&(-1)^{|x||y|}\big(\omega(y,\alpha^{-1}(x)\cdot \alpha^{-2}(z\cdot t))-\omega(\alpha(y),\alpha^{-1}(x\bullet z)\cdot t)\big)\\=&-(-1)^{|x||y|}\big(\omega(\alpha^{2}(y),(x\cdot z)\cdot \alpha(t)-\alpha(x)\cdot (z\cdot t)))\\
=&-(-1)^{|x||y|}\omega(\alpha^{2}(y),as(x,z,t)).
\end{align*}
Thus,
\begin{align*}
&\omega(\mathfrak{ass}_m(x,y,z),\alpha^{2}(t))+(-1)^{|x||y|}\big(\omega(\mathfrak{ass}_r(y,x,z),\alpha^{2}(t)\big)\\&
=(-1)^{|x|(|y|+|z|+|t|)}\omega(\alpha^{2}(y),as(z,t,x)-(-1)^{|x||y|}\omega(\alpha^{2}(y),as(x,z,t))\\&=
\omega(\alpha^{2}(y),(-1)^{|x|(|z|+|t|)}as(z,t,x)-as(x,z,t))\\&=0.
\end{align*}
This implies that $\omega(\mathfrak{ass}_m(x,y,z)+(-1)^{|x||y|}\mathfrak{ass}_r(y,x,z),\alpha^{2}(t))=0$, for all $x,y,z,t \in \mathcal{H}(\mathcal{A})$. Since $\omega$ is non degenerate
and $\alpha$ is bijective, we have
$$\mathfrak{ass}_m(x,y,z)+(-1)^{|x||y|}\mathfrak{ass}_r(y,x,z)=0.$$
Then,  the identity \eqref{Hom-preAlt1} holds. Now we check the identity \eqref{Hom-preAlt2} for any $x,y,z,t \in \mathcal{H}(\mathcal{A})$, we have $\omega(\mathfrak{ass}_m(x,y,z),\alpha^{2}(t))=(-1)^{|x|(|y|+|z|+|t|)}\omega(\alpha^{2}(y),as(z,t,x))$.
On the other hand, we have
\begin{align*}
& (-1)^{|z||y|}\omega(\mathfrak{ass}_l(x,z,y),\alpha^{2}(t))=(-1)^{|z||y|}\omega\big( (x\bullet z)\succ\alpha(y)-\alpha(x)\succ (z\succ y),\alpha^{2}(t)\big)\\
&=(-1)^{|z||y|}\omega\big( (x\bullet z)\succ\alpha(y),\alpha^{2}(t)-\omega(\alpha(x)\succ (z\succ y),\alpha^{2}(t))\big)\\
&=(-1)^{|z||y|}\omega( \alpha(\alpha^{-1}(x\bullet z))\succ\alpha(y),\alpha^{2}(t))-\omega(\alpha(x)\succ (z\succ y),\alpha^{2}(t))\big)\\
&=(-1)^{|z||y|}\big((-1)^{(|x|+|z|)(|y|+|t|)}\omega(\alpha(y),t\cdot \alpha^{-1}(x\bullet z))-(-1)^{|x|(|z|+|y|+|t|)}\omega(z\succ y,t\cdot x)\big)\\
&=(-1)^{|z||y|}\big((-1)^{(|x|+|z|)(|y|+|t|)}\omega(\alpha^{2}(y),\alpha(t)\cdot (x\bullet z))\\ 
&\hspace{3cm} -(-1)^{|x|(|z|+|y|+|t|)}(-1)^{|z|(|y|+|t|+|x|)}\omega( y,\alpha^{-2}(t\cdot x)\cdot\alpha^{-1}(z)\big)\\
&=(-1)^{|z||y|}\big((-1)^{(|x|+|z|)(|y|+|t|)}\omega(\alpha^{2}(y),\alpha(t)\cdot (x\bullet z))\\
&\hspace{3cm} -(-1)^{|x|(|z|+|y|+|t|)}(-1)^{|z|(|y|+|t|+|x|)}\omega( \alpha^{2}(y),(t\cdot x)\cdot\alpha(z)\big)\\
&=-(-1)^{|z||y|}\big((-1)^{(|x|+|z|)(|y|+|t|)}\omega(\alpha^{2}(y),(t\cdot x)\cdot\alpha(z)-\alpha(t)\cdot (x\bullet z))\big)\\
&=-(-1)^{|z||y|}\big((-1)^{(|x|+|z|)(|y|+|t|)}\omega(\alpha^{2}(y),as(t,x,z)\big)\\
&=-(-1)^{|x||y|}\omega(\alpha^{2}(y),as(x,z,t)).
\end{align*}
In the same way, we have $\mathfrak{ass}_m(x,y,z)+(-1)^{|y||z|}\mathfrak{ass}_l(x,z,y)=0.$ 
The proof of other identities is similar.
\end{proof}



\end{document}